\documentclass[11pt]{amsart}

\theoremstyle{plain}
\newtheorem{theorem}{Theorem}
\newtheorem{cor}{Corollary}
\newtheorem{lemma}{Lemma}
\newtheorem{proposition}{Proposition}
\newtheorem{fact}{Fact}
\newtheorem*{main1}{Transitivity Lemma}
\newtheorem*{main2}{Transitivity Lemma for weak minors}
\newtheorem*{main3}{Infinitary Transitivity}
\newtheorem*{main}{Claim}


\author{Miguel Couceiro}
\address{Department of Mathematics, Statistics and Philosophy\\
University of Tampere\\ 
Kalevantie 4, 33014 Tampere, Finland}
\email{Miguel.Couceiro@uta.fi}
\urladdr{http://mtl.uta.fi/miguel/}

\title[Generalized Functions and Relational Constraints]{Galois Connections for Generalized Functions and Relational Constraints}
\subjclass[2000]{Primary 08A02; secondary 03C40--03B50}
\keywords{multivalued functions, partial and total functions,
relational constraints, constraint satisfaction, function class definability,
Galois connections, Galois closed sets, variable substitutions, 
conjunctive minors, weak minors, local closures}

\thanks{Partially supported by the Graduate school in Mathematical Logic MALJA.
Supported in part by grant $\sharp $28139 from the Academy of Finland}

\begin{document}

\begin{abstract} 
In this paper we focus on functions of the form $A^n\rightarrow \mathcal{P}(B)$, for possibly different
 arbitrary non-empty sets $A$ and $B$, and where $\mathcal{P}(B)$ denotes the set of all subsets of $B$.
These mappings are called \emph{multivalued functions}, and they 
generalize total and partial functions.
 We study Galois connections between these generalized functions and 
ordered pairs $(R,S)$ of relations
 on $A$ and $B$, respectively, called \emph{constraints}. We describe
the Galois closed sets, and decompose the associated Galois operators, 
by means of necessary and sufficient conditions which specialize, in the total
 single-valued case, to those given in \cite{CF}.  
\end{abstract}
\maketitle

\section{Introduction}

In \cite{Po1} and \cite{Po2}, P\"oschel developed a Galois theory for heterogeneous functions (i.e. functions on a 
cartesian product $A_{i_1}\times \ldots \times A_{i_n}$ to $A_j$, where the underlying sets belong to a family 
$(A_i)_{i\in I}$ of pairwise disjoint finite sets), in which the closed classes of functions are defined 
by invariant multisorted relations $R=\cup _{i\in I}R_i$ where $R_i\subseteq A_{i}^m$, and dually, 
the closed systems of relations are charaterized by the functions preserving them (for further background,
 see also \cite{PK}).
Still in the finite case, Pippenger studied in \cite{Pi}, the particular bi-sorted case of finite functions 
(i.e. mappings of the form $f:A^n\rightarrow B$), and introduced a Galois framework
 in which the dual objects are replaced by ordered pairs $(R,S)$ of relations
 on $A$ and $B$, respectively, called \emph{constraints},
and where the multisorted preservation is replaced by the more stringent notion of 
constraint satisfaction. This latter theory was extended in \cite{CF}
 by removing the finiteness condition on the underlying sets $A$ and $B$.

In this paper we study the more general notion of
\emph{multivalued functions}, that is, mappings of the form $A^n\rightarrow \mathcal{P}(B)$,
 where $\mathcal{P}(B)$ denotes the set of all subsets of $B$. 
We introduce the Galois connection between sets of these generalized functions and sets of 
 constraints $(R,S)$ (where $R\subseteq A^m$ and $S\subseteq B^m$), based on a 
more general notion of constraint satisfaction (see Section 2). 
 Although the functions that we consider can still be treated as maps $A^n\rightarrow C$, where $C=\mathcal{P}(B)$,
 our approach extends the framework in \cite{Pi} and \cite{CF}, because we take as dual objects constraints 
  in which the ``consequent" $S$ is a relation defined over $B$,
 and not over $C=\mathcal{P}(B)$ as it is defined in these papers.

We describe the Galois closed classes of 
multivalued functions (Section 3) and the Galois closed sets of constraints (Section 4), in terms of
 closures which essentially extend to the multivalued case the conditions presented in \cite{CF}.
We consider further Galois connections by
restricting the set of primal objects to partial functions,
and to total multivalued functions, i.e.\@ mappings $A^n\rightarrow \mathcal{P}(B)$
which are non-empty-valued on every $n$-tuple over $A$.
 (For universal algebraic analogues, see e.g. \cite{FR} and \cite{Bo},
 respectively, and \cite{Ro} for an unified approach
 to these extensions.) As corollaries we obtain the characterizations, given in \cite{CF},
of the closed classes of single-valued functions (see Corollary 1 (c)),
 and the corresponding dual closed sets of 
constraints (see Corollary 3).  
Furthermore, we present factorizations of the closure maps associated with the above-mentioned Galois
connections, as compositions of simpler operators.

\section{Basic notions}

Throughout the paper, we shall always consider arbitrary non-empty base sets $A$, $B$, etc. Also, the integers 
$n$, $m$, etc., are assumed to be positive and thought of as Von Neumann ordinals, i.e. each ordinal is the non-empty set
of lesser ordinals. With this formalism, $n$-tuples over a set $A$ are just unary maps 
from $n=\{0,\ldots ,n-1\}$ to $A$. Thus an \emph{$m$-ary relation $R$ on $A$} 
(i.e. a subset $R\subseteq A^m$) is viewed as a set of unary maps ${\bf a}=(a_i\mid i\in m)$ from $m$ to $A$.  
Furthermore, we shall distinguish between empty relations of different arities, and we write $\emptyset ^m$
to denote the $m$-ary empty relation. For $m=1$, we use $\emptyset $ (instead of $\emptyset ^1$) to denote
the unary empty relation. 
In order to present certain concepts in a unifying setting, e.g. those of total multivalued and 
partial functions, we shall think of functions as having specific domain, codomain and graph.

An $n$-ary \emph{multivalued function on $A$ to $B$} is 
a map $f: A^n \rightarrow \mathcal{P}(B)$, where $\mathcal{P}(B)$ denotes the set of all subsets of $B$.
For $A=B$, these functions are called \emph{multioperations} or \emph{multifunctions on} $B$,
 and for $A=\mathcal{P}(B)$ the maps $f: \mathcal{P}(B)^n \rightarrow \mathcal{P}(B)$ are said to be \emph{lifted} 
(see \cite{DP}). By a \emph{class of multivalued functions} we simply mean a set of multivalued functions
 of various arities. If $f: A^n \rightarrow \mathcal{P}(B)$ is 
non-empty-valued on every $n$-tuple over $A$,
 then $f$ is said to be a \emph{total multivalued function on $A$ to $B$}. These indeed correspond to 
total functions in the usual sense, i.e. to each $n$-tuple over $A$, they associate at least one element of $B$.
We denote by $\Theta _{AB}$ the class of all multivalued functions on $A$ to $B$,
and by $\Theta _{AB}^t$ the class of all total multivalued functions on $A$ to $B$.

In this paper we also consider the following particular cases of multivalued functions. 
We say that a multivalued function $f: A^n \rightarrow \mathcal{P}(B)$ is a 
\emph{partial function on $A$ to $B$} if it is either empty or singleton-valued on every $n$-tuple over $A$,
i.e. if for every ${\bf a}$ in $A^n$, we have $f({\bf a})=\emptyset $ or $f({\bf a})=\{b\}$, 
for some $b$ in $B$. Although partial functions on $A$ to $B$ are usually defined as maps 
$p: D \rightarrow B$ where $D\subseteq A^n$ 
(see e.g. \cite{BW}, and for partial operations, see e.g. \cite{R, BHP}), it is easy to establish  
 a complete correspondence between these definitions.
For each positive integer $n$, 
the $n$-ary partial function $e_n$ which has empty value on every element of $A^n$, is called
the $n$-ary \emph{empty-valued function}. With $\Theta _{AB}^p$ we denote the class of all partial
 functions on $A$ to $B$.

Observe that the functions of several variables on $A$ to $B$ considered in [CF], 
correspond to the partial functions on $A$ to $B$ (as formerly defined) which are, in addition, total.
In other words, there is a bijection between $\Theta _{AB}^s=\Theta _{AB}^t\cap \Theta _{AB}^p$ and 
$\cup _{n\geq 1}B^{A^n}$. In this paper we shall refer to functions in $\Theta _{AB}^s$
as \emph{single-valued functions on $A$ to $B$}.

For a multivalued function $f: A^n \rightarrow \mathcal{P}(B)$ 
and $m$-tuples ${\bf a}^1,\ldots ,{\bf a}^n$ over $A$, we write $f({\bf a}^1\ldots {\bf a}^n)$
for the $m$-ary relation on $B$, defined by
 \begin{displaymath}
 f({\bf a}^1\ldots {\bf a}^n)= {\Pi }_{i\in m}f(({\bf a}^1\ldots {\bf a}^n)(i)) 
\end{displaymath}
where $({\bf a}^1\ldots {\bf a}^n)(i)=({\bf a}^1(i)\ldots {\bf a}^n(i)) $. 
Note that if $f(({\bf a}^1\ldots {\bf a}^n)(i))=\emptyset $, for some $i\in m$, then 
$f({\bf a}^1\ldots {\bf a}^n)=\emptyset ^m$.
If $R$ is an $m$-ary relation on $A$, we denote by $fR$ the $m$-ary relation on $B$, defined by
 \begin{displaymath}
 fR=\cup \{f({\bf a}^1\ldots {\bf a}^n):{\bf a}^1,\ldots ,{\bf a}^n \in R\}. 
\end{displaymath}

An $m$-ary \emph{$A$-to-$B$ relational constraint} is an ordered pair $(R,S)$ of relations  
$R\subseteq A^m$ and $S\subseteq B^m$, called \emph{antecedent} and \emph{consequent}, respectively,
of the constraint. A multivalued function $f: A^n \rightarrow \mathcal{P}(B)$ 
is said to \emph{satisfy} the constraint $(R,S)$ 
if $fR\subseteq S$. 
Observe that for each $1\leq m$, every multivalued function on $A$ to $B$ satisfies the $m$-ary 
\emph{empty constraint} $(\emptyset ^m,\emptyset ^m)$,
 and the $m$-ary \emph{trivial constraint} $(A^m,B^m)$. Moreover,  
every partial function on $A$ to $B$ satisfies the \emph{binary equality constraint} $(=_A,=_B)$,
where $=_A$ and $=_B$ denote the equality relations on $A$ and on $B$, respectively.

For a set $\mathcal{T}$ of $A$-to-$B$ constraints, we denote by ${\bf mFSC}(\mathcal{T})$ 
 the class of all multivalued functions on $A$ to $B$ satisfying every constraint in $\mathcal{T}$.
The notation ${\bf mFSC}$ stands for ``multivalued functions satisfying constraints". 
 A class $\mathcal{M}$ of multivalued functions on $A$ to $B$
 is said to be \emph{definable} by a set $\mathcal{T}$ of $A$-to-$B$ constraints,
 if $\mathcal{M}={\bf mFSC}(\mathcal{T})$.
Similarly, the classes of 
\begin{itemize}
\item[(i)] total multivalued functions of the form 
${\bf tFSC}(\mathcal{T})=\Theta _{AB}^t\cap {\bf mFSC}(\mathcal{T})$,
\item[(ii)] partial functions of the form 
${\bf pFSC}(\mathcal{T})=\Theta _{AB}^p\cap {\bf mFSC}(\mathcal{T})$, and 
\item[(iii)] single-valued functions of the form 
${\bf sFSC}(\mathcal{T})=\Theta _{AB}^s\cap {\bf mFSC}(\mathcal{T})$ 
\end{itemize}
are said to be 
\emph{definable within} $\Theta _{AB}^t,\Theta _{AB}^p$, and $\Theta _{AB}^s$, respectively,
 by the set $\mathcal{T}$. 

Dually, for a class $\mathcal{M}$ of multivalued functions on $A$ to $B$, we denote by 
${\bf CSF}(\mathcal{M})$
 the set of all $A$-to-$B$ constraints satisfied by every function in $\mathcal{M}$. 
Note that ${\bf CSF}$ stands for ``constraints satisfied by functions". 
In analogy with the function case, a set $\mathcal{T}$ of $A$-to-$B$ constraints
is said to be \emph{characterized} by a set $\mathcal{M}$ of multivalued functions,
if $\mathcal{T}={\bf CSF}(\mathcal{M})$.

Let $V$ and $W$ be arbitrary sets. It is well known that each binary relation 
$\triangleright \subseteq V\times W$
induces a \emph{Galois connection} between $V$ and $W$, determined by the pair of mappings
$v:\mathcal{P}(V)\rightarrow \mathcal{P}(W)$ and $w:\mathcal{P}(W)\rightarrow \mathcal{P}(V)$, defined
as follows:
\begin{displaymath}
v(X)=\{b\in W: a\triangleright b,\textrm{ for every } a\in X \}
 \end{displaymath}
\begin{displaymath}
w(Y)=\{a\in V: a\triangleright b,\textrm{ for every } b\in Y \}. 
\end{displaymath}
The associated operators $X\mapsto (w\circ v)(X)$ and $Y\mapsto (v\circ w)(Y)$ are
\emph{extensive}, \emph{monotone} and \emph{idempotent}, i.e. they satisfy the following conditions
 \begin{itemize}
\item[E. \quad ] for every $X\subseteq V$  and  $Y\subseteq W$,

$X\subseteq (w\circ v)(X)$  and  $Y\subseteq (v\circ w)(Y)$,
\item[M. \quad ] if $X'\subseteq X$  and  $Y'\subseteq Y$, then

$(w\circ v)(X')\subseteq (w\circ v)(X)$  and  $(v\circ w)(Y')\subseteq (v\circ w)(Y)$, 
\item[I. \quad ] for every $X\subseteq W$  and  $Y\subseteq V$,

$(w\circ v)((w\circ v)(X))= (w\circ v)(X)$  and  $(v\circ w)((v\circ w)(Y))= (v\circ w)(Y)$),
\end{itemize}
  respectively. In other words, $w\circ v$ and $v\circ w$ are \emph{closure operators} on $V$ and $W$,
 respectively, and the sets $X$ and $Y$ satisfying $(w\circ v)(X)=X$ and $(v\circ w)(Y)=Y$ are the 
(\emph{Galois}) \emph{closed sets} associated with $v$ and $w$. Moreover, $(w\circ v)(X)$ and 
$(v\circ w)(Y)$ are the smallest closed sets containing
 $X\subseteq V$ and $Y\subseteq W$, respectively, and are said to be \emph{generated} by $X$ and $Y$.
(For background on Galois connections see e.g. \cite{O}, and \cite{Pi2} for a later reference.)

Based on the relation of constraint satisfaction (between multivalued functions and constraints),
 we define the Galois connection ${\bf mFSC}-{\bf CSF}$
between sets of multivalued functions and sets of relational constraints.
Let $V$ be the class of all multivalued functions on $A$ to $B$,
 and $W$ the set of all $A$-to-$B$ relational constraints. 
Interpreting $\triangleright $ as the binary relation ``satisfies", we have that:
\begin{itemize}
\item[(a)] $v(\mathcal{K})={\bf CSF}(\mathcal{K})$
for every $\mathcal{K}\subseteq V$, and
\item[(b)] $w(\mathcal{T})={\bf mFSC}(\mathcal{T})$
for every $\mathcal{T}\subseteq W$.
\end{itemize}
Similarly, we define the correspondences ${\bf tFSC}-{\bf CSF}$,
${\bf pFSC}-{\bf CSF}$, and
${\bf sFSC}-{\bf CSF}$, by restricting $V$ to $\Theta _{AB}^t,\Theta _{AB}^p$,
 and $\Theta _{AB}^s$, respectively.

With this terminology, the classes of generalized functions definable by constraints are exactly the 
closed sets of functions associated with the corresponding Galois connections, and 
the sets of relational constraints characterized by generalized functions correspond to the 
dual Galois closed sets.

\section{Galois closed Sets of Generalized Functions}

We say that an $n$-ary multivalued function $g$ on $A$ to $B$ is a \emph{value restriction} of an 
$n$-ary multivalued function $f$ on $A$ to $B$, if for every ${\bf a}\in A^n$ we have 
$g({\bf a})\subseteq f({\bf a})$. A class $\mathcal{M}$ of multivalued functions on $A$ to $B$
is said to be \emph{closed under taking value restrictions}
 if every value restriction of a member of $\mathcal{M}$ is also in $\mathcal{M}$.
(In \cite{Bo}, where $A=B$ is finite,
 the non-empty value restrictions of a total
multivalued function $f$ are called \emph{subfunctions} of $f$.)

We now introduce a key concept which extends that of \emph{simple variable substitution} 
(appearing in \cite{CF}, and referred to as \emph{minor} in \cite{Pi}) to multivalued functions,
 and subsumes value restrictions. 
We say that an $m$-ary multivalued function $g$ from $A$ to $B$
is obtained from an $n$-ary multivalued function $f$ from $A$ to $B$
 by \emph{restrictive variable substitution},
if there is a map $l$ from $n$ to $m$ such that 
\begin{displaymath}
g({\bf a})\subseteq f({\bf a}\circ l)
\end{displaymath}
for every $m$-tuple ${\bf a}\in A^m$. If $g$ is non-empty valued, i.e. $g({\bf a})\not=\emptyset $ 
for every ${\bf a}\in A^m$, then we say that $g$ is obtained from $f$
  by \emph{non-empty restrictive variable substitution}. Note that within $\Theta _{AB}^s$,
the inclusion may be replaced by equality, and in this case we use the term "simple" instead of "restrictive"
(see \cite{CF}).
 
 A class $\mathcal{M}$ of multivalued functions of several variables is said to be 
\emph{closed under restrictive variable substitutions}
if every multivalued function obtained from a function $f$ in $\mathcal{M}$
 by restrictive variable substitution
 is also in $\mathcal{M}$.  
 For any class $\mathcal{M}$ of multivalued functions, we denote by
 ${\bf RVS}(\mathcal{M})$ the smallest class containing $\mathcal{M}$, and
 closed under ``restrictive variable substitutions".
Similarly, we use ${\bf RVS}^t(\mathcal{M})$ to denote the smallest class containing $\mathcal{M}$, and
 closed under non-empty restrictive variable substitutions.
 By the definitions above it follows:

\begin{fact} 
For any class $\mathcal{M}\subseteq \Theta _{AB}$, we have
\begin{itemize}
\item[(i)] ${\bf RVS}^t(\Theta _{AB}^t\cap \mathcal{M})=\Theta _{AB}^t\cap {\bf RVS}(\mathcal{M})$, 
\item[(ii)] ${\bf RVS}(\Theta _{AB}^p\cap \mathcal{M})\subseteq \Theta _{AB}^p$, and
\item[(iii)] ${\bf RVS}^t(\Theta _{AB}^s\cap \mathcal{M})\subseteq \Theta _{AB}^s$.   
\end{itemize} 
\end{fact}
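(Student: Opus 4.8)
The plan is to prove each of the three inclusions by unwinding the definitions of the operators ${\bf RVS}$ and ${\bf RVS}^t$ and the three distinguished subclasses $\Theta_{AB}^t$, $\Theta_{AB}^p$, $\Theta_{AB}^s$. The common mechanism is that a single restrictive variable substitution sends an $n$-ary function $f$ to an $m$-ary function $g$ with $g(\mathbf{a})\subseteq f(\mathbf{a}\circ l)$, so I first want to record how the three defining properties (non-empty-valuedness, being empty-or-singleton-valued, being exactly single-valued) behave under one such step, and then lift this to the closure operators by induction on the number of substitution steps.

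For (ii) I would argue that the property of being empty-or-singleton-valued is preserved under a single restrictive variable substitution: if $g(\mathbf{a})\subseteq f(\mathbf{a}\circ l)$ and $f\in\Theta_{AB}^p$, then $f(\mathbf{a}\circ l)$ has at most one element, so its subset $g(\mathbf{a})$ does too, whence $g\in\Theta_{AB}^p$. Since $\Theta_{AB}^p\cap\mathcal{M}\subseteq\Theta_{AB}^p$ and $\Theta_{AB}^p$ is thereby closed under restrictive variable substitutions, the smallest such class containing $\Theta_{AB}^p\cap\mathcal{M}$ stays inside $\Theta_{AB}^p$, giving ${\bf RVS}(\Theta_{AB}^p\cap\mathcal{M})\subseteq\Theta_{AB}^p$. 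For (iii) I would combine the single-valuedness bookkeeping of (ii) with the non-emptiness bookkeeping used for the total case: a \emph{non-empty} restrictive variable substitution of a single-valued $f$ is both non-empty-valued (by fiat) and singleton-valued (as in (ii), since $f(\mathbf{a}\circ l)$ is now a singleton and $g(\mathbf{a})$ is a non-empty subset of it, hence equal to it), so it is again single-valued; closure of $\Theta_{AB}^s$ under ${\bf RVS}^t$ then yields ${\bf RVS}^t(\Theta_{AB}^s\cap\mathcal{M})\subseteq\Theta_{AB}^s$.

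Part (i) is the genuine equality and will be the main obstacle, requiring both inclusions. For $\subseteq$ I would show, as above, that a non-empty restrictive variable substitution of a total function is total, and that it is in particular an ordinary restrictive variable substitution; hence the right-hand side $\Theta_{AB}^t\cap{\bf RVS}(\mathcal{M})$ contains $\Theta_{AB}^t\cap\mathcal{M}$ and is closed under non-empty restrictive variable substitutions, so it contains the left-hand side. The reverse inclusion $\supseteq$ is the delicate direction: given a total function $h\in{\bf RVS}(\mathcal{M})$, I must exhibit it as reachable from $\Theta_{AB}^t\cap\mathcal{M}$ using only non-empty substitutions. Here I would trace a derivation of $h$ inside ${\bf RVS}(\mathcal{M})$ and argue that, because the endpoint $h$ is total, one may choose the starting function in $\mathcal{M}$ to be total and each intermediate step to be non-empty-valued — the key point being that $h(\mathbf{a})\subseteq f(\mathbf{a}\circ l)$ with $h(\mathbf{a})\neq\emptyset$ forces $f(\mathbf{a}\circ l)\neq\emptyset$, so totality of $h$ propagates backward to the relevant values of $f$ along $l$, letting one replace $f$ by a suitable total value restriction without leaving $\mathcal{M}$'s substitution closure. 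Making this backward propagation precise, so that the reconstructed total witness genuinely lies in $\Theta_{AB}^t\cap\mathcal{M}$ rather than merely in $\Theta_{AB}^t\cap{\bf RVS}(\mathcal{M})$, is the step I expect to demand the most care.
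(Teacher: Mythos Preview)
Your arguments for (ii), (iii), and the inclusion $\subseteq$ in (i) are correct and constitute exactly the direct verification the paper intends (it offers no proof beyond the sentence ``By the definitions above it follows'').

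Your caution about the reverse inclusion in (i) is well founded: that inclusion \emph{fails} for general $\mathcal{M}\subseteq\Theta_{AB}$, so the step you flag cannot be completed. Take $A=B=\{0,1\}$ and $\mathcal{M}=\{f\}$ with $f:A^2\to\mathcal{P}(B)$ given by $f(0,0)=f(1,1)=\{0\}$, $f(1,0)=\{1\}$, $f(0,1)=\emptyset$. Then $\Theta_{AB}^t\cap\mathcal{M}=\emptyset$, so ${\bf RVS}^t(\Theta_{AB}^t\cap\mathcal{M})=\emptyset$; but with $l:2\to 1$ the constant map, the total unary function $g(x)=\{0\}$ satisfies $g(x)\subseteq f(x,x)$, whence $g\in\Theta_{AB}^t\cap{\bf RVS}(\mathcal{M})$. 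Your proposed repair --- replacing $f$ by a total value restriction --- cannot produce an element of $\mathcal{M}$ itself, precisely the obstruction you anticipated. The equality in (i) \emph{does} hold under the additional hypothesis $\mathcal{M}\subseteq\Theta_{AB}^t$: then the witness $f\in\mathcal{M}$ is already total, so any total $h$ with $h(\mathbf{a})\subseteq f(\mathbf{a}\circ l)$ is by definition a non-empty restrictive variable substitution of $f\in\Theta_{AB}^t\cap\mathcal{M}$, giving $h\in{\bf RVS}^t(\Theta_{AB}^t\cap\mathcal{M})$ directly. This restricted version is all that is actually used downstream (Proposition~1, Corollary~1, Proposition~2), so the defect lies in the generality of the stated Fact rather than in the applications.
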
 
It is easy to check that every member of ${\bf RVS}(\mathcal{M})$, and thus of ${\bf RVS}^t(\mathcal{M})$, 
satisfies every constraint in ${\bf CSF}(\mathcal{M})$.

Due to the fact that we consider relational constraints of finite arities, the non-satisfaction of 
a constraint by a multivalued function is always detected in a finite restriction to the domain of the function.
 For this reason, we recall the the concept of ``local closure".

A class $\mathcal{M}\subseteq \Theta _{AB}$ is said 
to be \emph{locally closed} 
if it contains every multivalued function $f: A^n \rightarrow \mathcal{P}(B)$ for which every restriction 
to a finite subset of its domain $A^n$ coincides with a restriction of some member of $\mathcal{M}$.
Obviously, if $A$ is finite, then every class $\mathcal{M}\subseteq \Theta _{AB}$ is locally closed.

It is not difficult to verify that this property is indeed a necessary condition on classes definable by constraints.
But even if closure under restrictive variable substitutions is assumed, say on a class
$\mathcal{M}\subseteq \Theta _{AB}$, it is not sufficient to guarantee the existence of a set of constraints defining   
$\mathcal{M}$. 

To illustrate, let $A=B=\{0,1\}$, 
 and let $\mathcal{M}$ be the class containing only the unary constant function ${\bf 0}:x\mapsto \{0\}$,
 the unary constant function ${\bf 1}:x\mapsto \{1\}$,
 and the unary ``identity" ${\bf i}:x\mapsto \{x\}$, for every $x\in A=B$.
Consider the unary multivalued functions $f:A \rightarrow \mathcal{P}(B)$  
defined by
 \begin{displaymath}
 f(x)= {\bf 1}(x) {\cup } {\bf i}(x) \textrm{ i.e. } f(0)=B=\{0,1\}\textrm{ and } f(1)= \{1\},
 \end{displaymath}
and $g:A \rightarrow \mathcal{P}(B)$ defined by 
 \begin{displaymath}
 g(x)={\bf 0}(x) {\cup } {\bf 1}(x) \textrm{ i.e. } g(x)=B=\{0,1\}, \textrm{ for all $x\in A=\{0,1\}$.} 
 \end{displaymath}
Note that 
 \begin{displaymath}
{\Pi }_{{\bf a}\in A}f({\bf a})\subseteq {\cup }_{h\in \mathcal{M}}{\Pi }_{{\bf a}\in A}h({\bf a})
\subset   {\Pi }_{{\bf a}\in A}g({\bf a})=B^2
 \end{displaymath} 
Thus every constraint satisfied by every function in $\mathcal{M}$
 must be also satisfied by the function $f$, but there are constraints satisfied by every function in $\mathcal{M}$
 which are not satisfied by $g$. 
 
Clearly, ${\bf RVS}(\mathcal{M})$ is locally closed
and closed under restrictive variable substitutions. 
 Also, it is not difficult to check that $f$ and $g$ do not belong to
${\bf RVS}(\mathcal{M})$. 
By the fact that $f$ satisfies every constraint satisfied by the members ${\bf RVS}(\mathcal{M})$, it follows
that ${\bf RVS}(\mathcal{M})$ is properly contained in every definable class containing $\mathcal{M}$.
Furthermore, from the fact that $g$ does not satisfy every constraint in ${\bf CSF}(\mathcal{M})$, 
we conclude that a class definable by constraints 
does not necessarily contain all functions which are defined as the ``union" of a family of members of the class.

This example motivates the introduction of the following concept which extends local closure.
We say that a class $\mathcal{M}$ of multivalued functions on $A$ to $B$ is 
\emph{closed under local coverings}
 if it contains every multivalued function $f$ 
 on $A$ to $B$ such that for every finite subset $F\subseteq A^n$, there is
a non-empty family $(f_i)_{i\in I}$ of members of $\mathcal{M}$ of the same arity as $f$, such that
 \begin{displaymath}
{\Pi }_{{\bf a}\in F}f({\bf a})\subseteq {\cup }_{i\in I}{\Pi }_{{\bf a}\in F}f_i({\bf a})  \qquad (1)
 \end{displaymath} 
Clearly, if a class is closed under local coverings, then it is locally closed. Moreover, within $\Theta _{AB}^p$,
the families $(f_i)_{i\in I}$ above, all reduce to singleton families, and within $\Theta _{AB}^s$,
 the inclusion relation can be replaced by equality, 
i.e. closure under local coverings coincides with local closure. 

Note also that condition $(1)$ is equivalent to
\begin{displaymath}
{\Pi }_{{\bf a}\in F}f({\bf a})\subseteq {\cup }_{g\in \mathcal{M}_n}{\Pi }_{{\bf a}\in F}g({\bf a}).
 \end{displaymath}
 where $n$ denotes the arity of $f$, and $\mathcal{M}_n$ 
is the set of all $n$-ary multivalued functions in $\mathcal{M}$. 
    
The smallest class of multivalued functions containing $\mathcal{M}$, and closed under ``local coverings" 
 is denoted by ${\bf LC}(\mathcal{M})$. It is not difficult to see that
${\bf LC}(\mathcal{M})$ is the class of functions 
obtained from $\mathcal{M}$ by adding all those functions whose restriction to each finite subset 
of its domain is contained in that restriction of some union of members of $\mathcal{M}$.
Moreover, we define:
\begin{itemize}
\item[(i)] ${\bf pLC}(\mathcal{M})=\Theta _{AB}^p\cap {\bf LC}(\mathcal{M})$, 
\item[(ii)] ${\bf tLC}(\mathcal{M})=\Theta _{AB}^t\cap {\bf LC}(\mathcal{M})$, and
\item[(iii)] ${\bf sLC}(\mathcal{M})=\Theta _{AB}^s\cap {\bf LC}(\mathcal{M})$,   
\end{itemize} 
 and we say that a class $\mathcal{M}$ is \emph{closed under partial local coverings},
 \emph{closed under total local coverings}, or 
\emph{closed under simple local coverings}, if ${\bf pLC}(\mathcal{M})=\mathcal{M}$, 
${\bf tLC}(\mathcal{M})=\mathcal{M}$, or ${\bf sLC}(\mathcal{M})=\mathcal{M}$, respectively.

\begin{proposition} 
Consider arbitrary non-empty sets $A$ and $B$, and let $\mathcal{M}$ be a class of multivalued functions.
\begin{itemize}
\item[(i)] The operators $\mathcal{M}\mapsto {\bf RVS}(\mathcal{M})$ and 
$\mathcal{M}\mapsto {\bf RVS}^t(\mathcal{M})$ are closure operators on $\Theta _{AB}$ and
 $\Theta _{AB}^t$, respectively. Moreover,
 they are also closure operators on $\Theta _{AB}^p$ and $\Theta _{AB}^s$, respectively.
\item[(ii)] The operators $\mathcal{M}\mapsto {\bf LC}(\mathcal{M})$,
$\mathcal{M}\mapsto {\bf tLC}(\mathcal{M})$, $\mathcal{M}\mapsto {\bf pLC}(\mathcal{M})$, and
$\mathcal{M}\mapsto {\bf sLC}(\mathcal{M})$ are closure operators
 on $\Theta _{AB}$, $\Theta _{AB}^t$, $\Theta _{AB}^p$ and $\Theta _{AB}^s$, respectively.
\item[(iii)] If ${\bf RVS}(\mathcal{M})=\mathcal{M}$,
 then ${\bf RVS}({\bf LC}(\mathcal{M}))={\bf LC}(\mathcal{M})$.
\item[(iv)] If ${\bf RVS}^t(\mathcal{M})=\mathcal{M}$,
 then ${\bf RVS}^t({\bf tLC}(\mathcal{M}))={\bf tLC}(\mathcal{M})$.
\end{itemize}
\end{proposition}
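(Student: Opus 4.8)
The plan is to dispatch (i) and (ii) by the standard machinery for operators defined as a ``smallest closed superset'', and to treat (iii) and (iv) by a pull-back/push-forward argument through the substitution map.

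For (i) and (ii) I would first record the general fact that whenever a property of classes is preserved under arbitrary intersections, the map sending $\mathcal{M}$ to the smallest class with that property containing $\mathcal{M}$ is a closure operator: extensivity and monotonicity are immediate, and idempotency holds because the value is already closed. The properties ``closed under restrictive variable substitutions'', ``closed under non-empty restrictive variable substitutions'', and ``closed under local coverings'' are all of this type, since if $f$ is obtained from a member of $\bigcap_j \mathcal{N}_j$ by a restrictive variable substitution then it is so obtained inside each $\mathcal{N}_j$, and a family witnessing a local covering inside $\bigcap_j \mathcal{N}_j$ witnesses it inside each $\mathcal{N}_j$; hence these properties pass to arbitrary intersections, and ${\bf RVS}$, ${\bf RVS}^t$, ${\bf LC}$ are closure operators on $\Theta_{AB}$ and $\Theta_{AB}^t$. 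For the ``moreover'' clauses of (i) I would invoke the preceding Fact: by part (ii) of the Fact, ${\bf RVS}$ sends subclasses of $\Theta_{AB}^p$ into $\Theta_{AB}^p$, and by part (iii) it sends subclasses of $\Theta_{AB}^s$ into $\Theta_{AB}^s$, so the restrictions of these operators to the respective subclasses are again closure operators. Finally ${\bf tLC}$, ${\bf pLC}$, ${\bf sLC}$ have the form $\mathcal{M}\mapsto D\cap {\bf LC}(\mathcal{M})$ for a fixed class $D$, and it is routine that intersecting the output of a closure operator with a fixed $D$ (and restricting its domain to subclasses of $D$) yields a closure operator, idempotency using $D\cap {\bf LC}(\mathcal{M})\subseteq {\bf LC}(\mathcal{M})$ together with monotonicity and idempotency of ${\bf LC}$; this gives (ii).

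For (iii), since ${\bf RVS}$ is extensive it suffices to prove the reverse inclusion, i.e. that ${\bf LC}(\mathcal{M})$ is already closed under restrictive variable substitutions, which by minimality of ${\bf RVS}$ amounts to showing that every function obtained from a member of ${\bf LC}(\mathcal{M})$ by a restrictive variable substitution again lies in ${\bf LC}(\mathcal{M})$. So let $f\in {\bf LC}(\mathcal{M})$ be $n$-ary and let $g$ be $m$-ary with $g(\mathbf{a})\subseteq f(\mathbf{a}\circ l)$ for a fixed $l\colon n\to m$. To witness $g\in {\bf LC}(\mathcal{M})$ I fix a finite $G\subseteq A^m$ and pull it back along the substitution to the finite set $F=\{\mathbf{a}\circ l : \mathbf{a}\in G\}\subseteq A^n$. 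Using $f\in {\bf LC}(\mathcal{M})$ I obtain a family $(f_i)_{i\in I}$ in $\mathcal{M}_n$ covering $f$ on $F$, and then push it forward by the same substitution, setting $g_i(\mathbf{a})=f_i(\mathbf{a}\circ l)$. Each $g_i$ is a simple, hence restrictive, variable substitution of $f_i$, so the hypothesis ${\bf RVS}(\mathcal{M})=\mathcal{M}$ guarantees $g_i\in \mathcal{M}_m$, and $(g_i)_{i\in I}$ is the candidate local covering of $g$ on $G$.

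The main obstacle is verifying that $(g_i)_{i\in I}$ really covers $g$ on $G$, that is, $\Pi_{\mathbf{a}\in G} g(\mathbf{a})\subseteq \bigcup_{i} \Pi_{\mathbf{a}\in G} g_i(\mathbf{a})$. The point is to lift a selection $t\in \Pi_{\mathbf{a}\in G} g(\mathbf{a})$ to a selection over $F$ on which the covering of $f$ can act: since $t(\mathbf{a})\in g(\mathbf{a})\subseteq f(\mathbf{a}\circ l)$, one seeks $s\in \Pi_{\mathbf{b}\in F} f(\mathbf{b})$ with $s(\mathbf{a}\circ l)=t(\mathbf{a})$, so that the index $i$ covering $s$ also covers $t$ through $g_i(\mathbf{a})=f_i(\mathbf{a}\circ l)$. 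This lifting is immediate when the map $\mathbf{a}\mapsto \mathbf{a}\circ l$ is injective on $G$ (in particular for surjective $l$, where distinct $m$-tuples have distinct pull-backs), and the argument then closes at once; the delicate case is when this map identifies tuples of $G$, where one must use the precise form of the covering hypothesis together with the defining inclusion of $g$ to produce a consistent selection $s$. I expect this fiber analysis to be the crux of the proof and the place where the restrictive (as opposed to arbitrary) nature of the substitutions is genuinely used. Part (iv) then follows by running the same scheme inside $\Theta_{AB}^t$: non-empty restrictive variable substitutions send total functions to total functions and preserve totality of the push-forward family, so combining the argument above with part (i) of the Fact, which identifies ${\bf RVS}^t(\Theta_{AB}^t\cap \mathcal{M})$ with $\Theta_{AB}^t\cap {\bf RVS}(\mathcal{M})$, and intersecting with $\Theta_{AB}^t$ yields ${\bf RVS}^t({\bf tLC}(\mathcal{M}))={\bf tLC}(\mathcal{M})$.
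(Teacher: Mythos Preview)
Your plan coincides with the paper's. For (i) and (ii) the paper is terser still, saying only that they ``follow immediately from the above definitions and Fact~1''; your closure-operator machinery spells out the same content. For (iii) the paper runs exactly your pull-back/push-forward scheme: set $F'=\{{\bf a}\circ l:{\bf a}\in F\}$, take a covering family $(f_i)$ for $f$ on $F'$, define $g_i({\bf a})=f_i({\bf a}\circ l)$ (in $\mathcal{M}$ by RVS-closure), and conclude via the chain
\[
\Pi_{{\bf a}\in F}g({\bf a})\subseteq\Pi_{{\bf a}\in F}f({\bf a}\circ l)\subseteq\bigcup_i\Pi_{{\bf a}\in F}f_i({\bf a}\circ l)=\bigcup_i\Pi_{{\bf a}\in F}g_i({\bf a}).
\]
Part (iv) is declared analogous.

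The one divergence is your ``delicate case''. The paper performs no fiber analysis whatsoever; it simply asserts the middle inclusion above. Your instinct that this step deserves scrutiny is actually sharper than your proposed remedy. If the local-covering products are read literally as indexed by the \emph{set} $F'$, that middle inclusion can fail when ${\bf a}\mapsto{\bf a}\circ l$ collapses distinct tuples: take $A=B=\{0,1\}$, let $\mathcal{M}$ be the RVS-closure of the two unary maps $f_1(0)=\{0\}$, $f_2(0)=\{1\}$ (both empty at $1$); then the unary $f$ with $f(0)=\{0,1\}$, $f(1)=\emptyset$ lies in ${\bf LC}(\mathcal{M})$, yet its binary simple substitute $g(a_0,a_1)=f(a_0)$ does not, since on $G=\{(0,0),(0,1)\}$ one gets $\{0,1\}^2$ against a union contained in $\{(0,0),(1,1)\}$. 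So no fiber analysis will close that gap. The chain goes through cleanly only if the covering condition is read with products indexed by finite \emph{sequences} of points (allowing repetitions), which is how the products $\Pi_{i\in m}$ are actually used in the proof of Theorem~1; under that reading one applies the covering of $f$ directly to the sequence $({\bf a}\circ l)_{{\bf a}\in G}$, and your delicate case never arises. Either way, the ``fiber analysis'' you anticipate is not what resolves the step.
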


\begin{proof}
Statements $(i)$ and $(ii)$ follow immediately from the above definitions and Fact 1.
The proof of $(iii)$ is analogous to that of Proposition 1 (a) in \cite{Co}. 
We show that ${\bf RVS}({\bf LC}(\mathcal{M}))\subseteq {\bf LC}(\mathcal{M})$. 
      
 Suppose that $g\in {\bf RVS}({\bf LC}(\mathcal{M}))$, say of arity $m$. Thus, 
there is an $n$-ary function $f$ in ${\bf LC}(\mathcal{M})$, 
and a map $l:n\rightarrow m$, such that 
\begin{displaymath}
g({\bf a})\subseteq f({\bf a}\circ l)
\end{displaymath}
for every $m$-tuple ${\bf a}\in A^m$. 
Let $F$ be a finite subset of $A^m$. 
We show that there is a non-empty 
family $(g^{F}_i)_{i\in I}$ of $m$-ary members of $\mathcal{M}$, such that
 \begin{displaymath}
{\Pi }_{{\bf a}\in F}g({\bf a})\subseteq {\cup }_{i\in I}{\Pi }_{{\bf a}\in F}g^{F}_i({\bf a}).
 \end{displaymath}
Consider the finite subset $F'\subseteq A^n$, defined by
\begin{displaymath}
F'=\{{\bf a}\circ l: {\bf a}\in F\} 
\end{displaymath}
From the fact that $f\in {\bf LC}(\mathcal{M})$, it follows that there is a non-empty 
family $(f^{F'}_i)_{i\in I}$ of $m$-ary members of $\mathcal{M}$, such that
 \begin{displaymath}
{\Pi }_{{\bf a}'\in F'}f({\bf a}')\subseteq {\cup }_{i\in I}{\Pi }_{{\bf a}'\in F'}f^{F'}_i({\bf a}').
 \end{displaymath}
For each $i\in I$, let $g^{F}_i$ be the $m$-ary function defined by 
\begin{displaymath}
g^{F}_i({\bf a})=f^{F'}_i({\bf a}\circ l)
\end{displaymath}
for every $m$-tuple ${\bf a}\in A^m$. Note that $(g^{F}_i)_{i\in I}$ is a family of members of
 $\mathcal{M}$, because
${\bf RVS}(\mathcal{M})=\mathcal{M}$. 
By the definition of $(f^{F'}_i)_{i\in I}$ and $(g^{F}_i)_{i\in I}$, it follows that,
 for every $m$-tuple ${\bf a}\in F$, 
\begin{displaymath}
{\Pi }_{{\bf a}\in F}g({\bf a})\subseteq {\Pi }_{{\bf a}\in F}f({\bf a}\circ l)\subseteq 
{\cup }_{i\in I}{\Pi }_{{\bf a}\in F}f^{F'}_i({\bf a}\circ l)={\cup }_{i\in I}{\Pi }_{{\bf a}\in F}g^{F}_i({\bf a}).
\end{displaymath}
Since the above argument works for every finite subset $F$ of $A^m$, we have that $g$
 is in ${\bf LC}(\mathcal{M})$.
The proof of $(iv)$ can be obtained by proceeding in analogy with the proof of $(iii)$.
\end{proof}

Using $(i)$, $(ii)$ and $(iii)$ of Proposition 1, it is straightfoward to check that, for every class
$\mathcal{M}\subseteq \Theta _{AB}$,  
${\bf LC}({\bf RVS}(\mathcal{M}))$ is the smallest class containing $\mathcal{M}$,
which is closed under local coverings, and closed under restrictive variable substitutions. Similarly, using  
$(i)$, $(ii)$ and $(iv)$ of Proposition 1, it is easy to check that, for every class
$\mathcal{M}\subseteq \Theta _{AB}^t$,  
${\bf tLC}({\bf RVS}^t(\mathcal{M}))$ is the smallest class containing $\mathcal{M}$,
which is closed under total local coverings, and closed under non-empty restrictive variable substitutions.

Our first main result provides necessary and sufficient conditions for a 
class of multivalued functions to be definable by relational constraints:

\begin{theorem}
Consider arbitrary non-empty sets $A$ and $B$.
 For any class $\mathcal{M}$ of multivalued functions on $A$ to $B$,
the following conditions are equivalent:
\begin{itemize}
\item[(i)] $\mathcal{M}$ is closed under local coverings, contains the unary empty-valued function $e_1$,
and is closed under restrictive variable substitutions;
\item[(ii)] $\mathcal{M}$ is definable by some set of $A$-to-$B$ constraints.
\end{itemize}
\end{theorem}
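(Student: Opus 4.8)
The plan is to recast condition (ii) as the statement that $\mathcal{M}$ equals its own Galois closure ${\bf mFSC}({\bf CSF}(\mathcal{M}))$. A class is definable by constraints precisely when it is a closed set of the Galois connection, i.e. when ${\bf mFSC}({\bf CSF}(\mathcal{M}))=\mathcal{M}$, and the inclusion $\mathcal{M}\subseteq{\bf mFSC}({\bf CSF}(\mathcal{M}))$ is automatic by extensivity. So the whole theorem reduces to matching the three closure properties of (i) against the reverse inclusion ${\bf mFSC}({\bf CSF}(\mathcal{M}))\subseteq\mathcal{M}$, and I would treat the two implications separately.

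For (ii)$\Rightarrow$(i) I would assume $\mathcal{M}={\bf mFSC}(\mathcal{T})$ and verify each condition in turn. That $e_1\in\mathcal{M}$ is immediate, since $e_1R=\emptyset^m\subseteq S$ for every constraint $(R,S)$, so $e_1$ satisfies all of $\mathcal{T}$. Closure under restrictive variable substitutions is already recorded in the remark following Fact 1: every member of ${\bf RVS}(\mathcal{M})$ satisfies every constraint in ${\bf CSF}(\mathcal{M})\supseteq\mathcal{T}$, whence ${\bf RVS}(\mathcal{M})\subseteq\mathcal{M}$. For closure under local coverings I would use that the failure of a finitary constraint is detected on finitely many arguments: if $f$ is locally covered by $\mathcal{M}$ and $(R,S)\in\mathcal{T}$, then any tuple of $fR$ arises from the columns of a matrix over $R$, the covering family supplies members of $\mathcal{M}$ whose products contain that tuple, and since those members satisfy $(R,S)$ the tuple lies in $S$; hence $fR\subseteq S$ and $f\in\mathcal{M}$.

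For (i)$\Rightarrow$(ii) the core is to establish ${\bf mFSC}({\bf CSF}(\mathcal{M}))\subseteq\mathcal{M}$. Since (i) makes $\mathcal{M}$ closed under local coverings, $\mathcal{M}={\bf LC}(\mathcal{M})$, so it suffices to place any $n$-ary $f$ satisfying every constraint in ${\bf CSF}(\mathcal{M})$ into ${\bf LC}(\mathcal{M})$. The device is to attach to each finite set $F\subseteq A^n$ a canonical constraint $(R,S)$: let $R\subseteq A^m$ be the set of rows of the matrix whose columns are exactly the tuples in $F$, and put $S=\bigcup_{h\in\mathcal{M}}hR$. Every $h\in\mathcal{M}$ satisfies $(R,S)$ by construction, so $(R,S)\in{\bf CSF}(\mathcal{M})$ and $f$ satisfies it; evaluating on the diagonal choice of rows gives ${\Pi}_{{\bf a}\in F}f({\bf a})\subseteq fR\subseteq S$. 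It then remains to rewrite $S=\bigcup_{h}hR$ as a union of products ${\Pi}_{{\bf a}\in F}h'({\bf a})$ of members of $\mathcal{M}$: each element of $hR$ is produced by some selection of rows of $R$, i.e. by a map on the variables of $h$, and the corresponding substitution instance $h'$ lies in $\mathcal{M}$ because ${\bf RVS}(\mathcal{M})=\mathcal{M}$. Adjoining the substitution instance $e_n$ of $e_1$ to cover the degenerate case in which the product is empty, this exhibits the required non-empty covering family and places $f$ in ${\bf LC}(\mathcal{M})=\mathcal{M}$, as desired.

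The step I expect to be the main obstacle is the faithful bookkeeping between $fR$ and the covering condition, in both directions. In contrast with the single-valued setting, an antecedent $R$ may repeat columns, so that $f$ is evaluated at one and the same argument in several coordinates and may take \emph{independent} values there; consequently the covering must be matched against the arguments counted with their multiplicities, and not merely against their underlying set. The technical heart is precisely the conversion of the union over all row-selections defining $hR$ into a union of products ${\Pi}_{{\bf a}\in F}h'({\bf a})$ of members of $\mathcal{M}$, which is exactly what closure under restrictive variable substitutions delivers, with the empty-valued function $e_1$ guaranteeing that the covering families so produced are never empty.
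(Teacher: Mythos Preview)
Your proposal is correct and follows essentially the same route as the paper's proof. The paper argues $(i)\Rightarrow(ii)$ by contrapositive---for each $f\notin\mathcal{M}$ it exhibits a separating constraint---while you phrase it as a direct inclusion into ${\bf LC}(\mathcal{M})$, but the constraint is the same: antecedent $R=\{{\bf a}^1,\ldots,{\bf a}^n\}$ given by the rows of the matrix whose \emph{distinct} columns enumerate $F$ (so the arity is $|F|$, which is how the paper sidesteps the multiplicity worry you flag), and consequent the union of the images under $\mathcal{M}$ (the paper takes $S=\cup_{g\in\mathcal{M}_n}g({\bf a}^1\ldots{\bf a}^n)$ over $n$-ary functions only, you take $S=\cup_{h\in\mathcal{M}}hR$ over all arities---under RVS-closure these coincide); both then invoke closure under restrictive variable substitutions to align arities and $e_1$ to guarantee that the covering family is non-empty.
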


\begin{proof}
First, we prove $(ii)\Rightarrow (i)$. Clearly, the unary empty-valued function satisfies every constraint
and it is easy to see that if a multivalued function $f$
 satisfies a constraint $(R,S)$, then every
function obtained from $f$ by restrictive variable substitution also satisfies $(R,S)$. Therefore,
any function class $\mathcal{M}$ definable by a set of constraints contains
 the unary empty-valued function,
 and is closed under restrictive variable substitutions.

To see that $\mathcal{M}$ is closed under local coverings, consider an $n$-ary multivalued function
 $f\not\in \mathcal{M}$. From $(ii)$ it follows that 
there is an $m$-ary constraint $(R,S)$ which is not satisfied by $f$ but
  satisfied by every function $g$ in $\mathcal{M}$. Hence, for some ${\bf a}^1,\ldots ,{\bf a}^n \in R$,
we have $f({\bf a}^1\ldots {\bf a}^n) \not \subseteq S$, and 
$g({\bf a}^1\ldots {\bf a}^n)\subseteq S$ for every $g\in \mathcal{M}_n$, where $\mathcal{M}_n$ 
is the set of all $n$-ary multivalued functions in $\mathcal{M}$.
 Thus, 
 \begin{displaymath}
{\Pi }_{i\in m}f(({\bf a}^1\ldots {\bf a}^n)(i))\not\subseteq {\cup }_{g\in \mathcal{M}_n}{\Pi }_{i\in m} g(({\bf a}^1\ldots {\bf a}^n)(i)).   
 \end{displaymath}

To prove the implication $(i)\Rightarrow (ii)$, assume $(i)$. 
We proceed as in the proof of Theorem 1 in \cite{CF},
 and show that for every function $f\not\in \mathcal{M}$, there is a constraint $(R_f,S_f)$
 satisfied by every member of $\mathcal{M}$, but not satisfied by $f$. This suffices to conclude $(ii)$
 because $\mathcal{M}={\bf mFSC}(\{(R_f,S_f):f\not\in \mathcal{M}\})$, i.e.
 the set $\{(R_f,S_f):f\not\in \mathcal{M}\}$ defines the class $\mathcal{M}$.

So suppose that $f\not\in \mathcal{M}$, say of arity $n$. 
Since $\mathcal{M}$ is closed under local coverings, 
 there is a finite subset $F\subseteq A^n$
 such that
  \begin{displaymath}
{\Pi }_{{\bf a}\in F}f({\bf a})\not\subseteq {\cup }_{i\in I}{\Pi }_{{\bf a}\in F}f_i({\bf a})
 \end{displaymath}
for every non-empty family $(f_i)_{i\in I}$ of $n$-ary members of $\mathcal{M}$.
In particular, 
\begin{displaymath} 
{\Pi }_{{\bf a}\in F}f({\bf a})\not\subseteq {\cup }_{g\in \mathcal{M}_n}{\Pi }_{{\bf a}\in F}g({\bf a})
 \end{displaymath}
where $\mathcal{M}_n$ is the set of all $n$-ary multivalued functions in $\mathcal{M}$. 
 Observe that $F$ can not be empty, and that $f$ can not be empty-valued on $F$. 
Let ${\bf a}^1,\ldots ,{\bf a}^n$ be tuples in $A^{\mid F\mid }$ such that
$F=\{({\bf a}^1\ldots {\bf a}^n)(i):i\in \mid  F\mid \}$, 
and let $(R,S)$ be the constraint whose antecedent is $R=\{{\bf a}^1,\ldots ,{\bf a}^n\}$, and whose
 consequent is defined by $S=\cup _{g\in \mathcal{M}_n}g({\bf a}^1\ldots {\bf a}^n)$.
Clearly, $f$ does not satisfy the $A$-to-$B$ constraint $(R,S)$, and since $\mathcal{M}$ is closed
under restrictive variable substitutions, it follows that 
every function in $\mathcal{M}$ satisfies $(R,S)$.
Thus for every function $f\not\in \mathcal{M}$, there is a constraint $(R_f,S_f)$ satisfied by every member of 
$\mathcal{M}$, but not satisfied by $f$.   
\end{proof}

From Fact 1 and Proposition 1 $(i)$, we obtain as particular cases of Theorem 1, characterizations
for classes of multivalued functions of the form ${\bf pFSC}(\mathcal{T})$, ${\bf tFSC}(\mathcal{T})$, and
${\bf sFSC}(\mathcal{T})$:

\begin{cor}
Consider arbitrary non-empty sets $A$ and $B$.
\begin{itemize}
\item[(a)] A class $\mathcal{M}^p$ of partial functions is 
definable within $\Theta _{AB}^p$ by some set of $A$-to-$B$ constraints if and only if it
is closed under partial local coverings, contains the unary empty-valued function,
and is closed under restrictive variable substitutions.
\item[(b)] A class $\mathcal{M}^t$ of total multivalued functions is 
definable within $\Theta _{AB}^t$ by some set of $A$-to-$B$ constraints if and only if it
is closed under total local coverings, and is closed under non-empty restrictive variable substitutions.
\item[(c)] \emph{(In [CF]:)} A class $\mathcal{M}^s$ of single-valued functions is 
definable within $\Theta _{AB}^s$ by some set of $A$-to-$B$ constraints if and only if it
is closed under simple local coverings, and is closed under simple variable substitutions.
\end{itemize}
\end{cor}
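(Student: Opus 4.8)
The plan is to obtain all three parts from Theorem 1 by one uniform device: given a class $\mathcal{M}$ living in a sub-universe $\Theta^{*}_{AB}\in\{\Theta^{p}_{AB},\Theta^{t}_{AB},\Theta^{s}_{AB}\}$, I first pass to its definable hull inside the full universe $\Theta_{AB}$, realize that hull as ${\bf mFSC}(\mathcal{T})$ for a suitable $\mathcal{T}$ via Theorem 1, and then intersect back with $\Theta^{*}_{AB}$. The identities of Fact 1 and the closure-operator statements of Proposition 1 are precisely what is needed to check that this intersection returns $\mathcal{M}$ and that the sub-universe closure conditions in the corollary match the hypotheses of Theorem 1. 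For the forward implication I reuse the same $\mathcal{T}$ and merely transport the three properties of ${\bf mFSC}(\mathcal{T})$ (closure under local coverings, containment of $e_{1}$, closure under restrictive variable substitutions) through the intersection with $\Theta^{*}_{AB}$.

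Part (a) is the clean case and I would treat it first. For the nontrivial direction, assume $\mathcal{M}^{p}\subseteq\Theta^{p}_{AB}$ with ${\bf pLC}(\mathcal{M}^{p})=\mathcal{M}^{p}$, containing $e_{1}$, and with ${\bf RVS}(\mathcal{M}^{p})=\mathcal{M}^{p}$ (this is consistent with Fact 1 (ii), which keeps restrictive substitutions inside $\Theta^{p}_{AB}$). Put $\mathcal{N}={\bf LC}(\mathcal{M}^{p})$. By Proposition 1 (iii) the class $\mathcal{N}$ is still closed under restrictive variable substitutions; it is closed under local coverings and contains $e_{1}$, so Theorem 1 yields $\mathcal{N}={\bf mFSC}(\mathcal{T})$. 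Then, directly from the definition of ${\bf pLC}$,
\[
{\bf pFSC}(\mathcal{T})=\Theta^{p}_{AB}\cap{\bf mFSC}(\mathcal{T})=\Theta^{p}_{AB}\cap{\bf LC}(\mathcal{M}^{p})={\bf pLC}(\mathcal{M}^{p})=\mathcal{M}^{p}.
\]
The converse transports the three properties of ${\bf mFSC}(\mathcal{T})$ through $\Theta^{p}_{AB}\cap(-)$, using Fact 1 (ii) for closure under substitutions and the monotonicity and idempotence of ${\bf LC}$ for closure under partial local coverings.

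Parts (b) and (c) follow the same scheme with the hull $\mathcal{N}={\bf LC}({\bf RVS}(\mathcal{M}))$, which is definable by Proposition 1 and Theorem 1; note that $e_{1}\in\mathcal{N}$, since restricting any member to the empty value along a map onto a single variable already produces $e_{1}$. Writing $\mathcal{N}={\bf mFSC}(\mathcal{T})$, the main obstacle is to verify that intersecting this hull with $\Theta^{t}_{AB}$ (respectively $\Theta^{s}_{AB}$) returns exactly $\mathcal{M}^{t}$ (respectively $\mathcal{M}^{s}$), i.e.\@ that ${\bf tLC}({\bf RVS}(\mathcal{M}^{t}))=\mathcal{M}^{t}$. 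Here I cannot merely quote the definition of ${\bf tLC}$ as in (a), because ${\bf RVS}$ introduces non-total functions into the class. The key observation is that, by transitivity of restrictive substitutions, every $m$-ary member $g$ of ${\bf RVS}(\mathcal{M}^{t})$ satisfies $g({\bf a})\subseteq f({\bf a}\circ l)$ for some total $f\in\mathcal{M}^{t}$ and some map $l$; since $f$ is total, ${\bf a}\mapsto f({\bf a}\circ l)$ is a non-empty restrictive substitution of $f$ and hence lies in ${\bf RVS}^{t}(\mathcal{M}^{t})=\mathcal{M}^{t}$ by Fact 1 (i), and $g$ is one of its value restrictions. Consequently, in any local covering of a total target the members of ${\bf RVS}(\mathcal{M}^{t})$ may be replaced, arity by arity, by members of $\mathcal{M}^{t}$ with pointwise larger values, which only enlarges the covering union; this gives ${\bf tLC}({\bf RVS}(\mathcal{M}^{t}))\subseteq{\bf tLC}(\mathcal{M}^{t})=\mathcal{M}^{t}$, the reverse inclusion being immediate.

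With this replacement step in hand, (b) and (c) conclude as in (a): the forward direction transports closure under total (respectively simple) local coverings and under non-empty (respectively simple) restrictive substitutions across the intersection, using Fact 1 (i) (respectively Fact 1 (iii)) to keep the substitution closure inside $\Theta^{t}_{AB}$ (respectively $\Theta^{s}_{AB}$), while the converse uses the hull together with the replacement step. For (c) I would also remark that inside $\Theta^{s}_{AB}$ the inclusions defining restrictive substitution and local covering collapse to equalities, so that simple variable substitution and simple local coverings are exactly the restrictions of the corresponding notions, whence (c) can alternatively be read off by intersecting (a) and (b). The only loose end is the empty class in (b) and (c), which is definable within the relevant sub-universe by the single constraint $(A^{1},\emptyset)$ forcing the empty value, and which vacuously meets the closure conditions.
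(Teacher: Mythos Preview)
Your argument is correct and uses exactly the ingredients the paper invokes (Fact~1, Proposition~1, Theorem~1); the paper's own proof is the single clause ``From Fact 1 and Proposition 1~(i), we obtain as particular cases of Theorem~1,'' so there is no detailed route to compare against. For part~(a) your hull-and-intersect strategy is the natural unpacking of that clause and matches what the paper must mean.

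For parts~(b) and~(c) you add one genuine piece of content the paper does not make explicit: the ``replacement step'' showing that every member of ${\bf RVS}(\mathcal{M}^t)$ is a value restriction of some member of $\mathcal{M}^t$ (via transitivity of restrictive substitutions and Fact~1~(i)), hence ${\bf tLC}({\bf RVS}(\mathcal{M}^t))={\bf tLC}(\mathcal{M}^t)$. This is correct and necessary for your black-box use of Theorem~1. An alternative reading of the paper's one-line proof---perhaps closer to its spirit---is simply to rerun the separating-constraint construction from the proof of Theorem~1 inside the sub-universe: for a total $f\notin\mathcal{M}^t$, closure under total local coverings still supplies the finite witness $F$, and closure under \emph{non-empty} restrictive variable substitutions already suffices to show that every $g\in\mathcal{M}^t$ satisfies the resulting constraint, because substituting variables in a total function with equality (rather than strict inclusion) yields a total function. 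Your approach trades this direct imitation for a cleaner appeal to Theorem~1 as a black box, at the price of the replacement lemma; both are sound.
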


We finish this section with the factorizations of the Galois closure operators on
$\Theta _{AB}$, $\Theta _{AB}^p$, $\Theta _{AB}^t$, and $\Theta _{AB}^s$, as compositions 
of the operators induced by the above closure conditions:

\begin{proposition}
Consider arbitrary non-empty sets $A$ and $B$.
 For any class of multivalued functions $\mathcal{M}\subseteq \Theta _{AB}$, the following hold:
\begin{itemize}
\item[(i)] ${\bf mFSC}({\bf CSF}(\mathcal{M}))={\bf LC}({\bf RVS}(\mathcal{M}\cup \{e_1\}))$. 
\item[(ii)] If $\mathcal{M}\subseteq \Theta _{AB}^p$, then 
${\bf pFSC}({\bf CSF}(\mathcal{M}))={\bf pLC}({\bf RVS}(\mathcal{M}\cup \{e_1\}))$. 
\item[(iii)] If $\mathcal{M}\subseteq \Theta _{AB}^t$, then 
${\bf tFSC} ({\bf CSF}(\mathcal{M}))={\bf tLC}({\bf RVS}^t(\mathcal{M}))$.
\item[(iv)] If $\mathcal{M}\subseteq \Theta _{AB}^s$, then 
${\bf sFSC}({\bf CSF}(\mathcal{M}))={\bf sLC}({\bf RVS}^t(\mathcal{M}))$. 
\end{itemize}
\end{proposition}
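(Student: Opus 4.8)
The plan is to read off all four identities from the characterization results already available, by matching each Galois closure operator with a ``smallest closed set'' description and then recognizing the right-hand sides as the corresponding least bi-closed classes. Recall from the general theory of Galois connections recalled in Section~2 that ${\bf mFSC}({\bf CSF}(\mathcal{M}))$ is the smallest ${\bf mFSC}$--${\bf CSF}$ closed class containing $\mathcal{M}$, i.e.\ the smallest class definable by constraints that contains $\mathcal{M}$; the same holds within $\Theta_{AB}^p$, $\Theta_{AB}^t$, $\Theta_{AB}^s$ for ${\bf pFSC}\circ{\bf CSF}$, ${\bf tFSC}\circ{\bf CSF}$, ${\bf sFSC}\circ{\bf CSF}$. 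The idea is then to substitute the internal descriptions of these closed classes furnished by Theorem~1 and Corollary~1, and to compare with the composite operators on the right.

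For $(i)$, by Theorem~1 the definable classes are exactly those that contain $e_1$, are closed under restrictive variable substitutions, and are closed under local coverings. By Proposition~1$(i)$--$(iii)$ and the remark following its proof, ${\bf LC}({\bf RVS}(\mathcal{M}\cup\{e_1\}))$ is the smallest class that contains $\mathcal{M}\cup\{e_1\}$ and is closed under both restrictive variable substitutions and local coverings; since it contains $e_1$ by construction, it is precisely the least class containing $\mathcal{M}$ that satisfies the three conditions of Theorem~1. Two classes that are each least with the same defining properties must agree, so I would finish by verifying the two inclusions explicitly: on one hand ${\bf LC}({\bf RVS}(\mathcal{M}\cup\{e_1\}))$ is itself definable (it contains $e_1$, is ${\bf LC}$-closed by idempotency, and is ${\bf RVS}$-closed by Proposition~1$(iii)$ applied to the ${\bf RVS}$-closed generator ${\bf RVS}(\mathcal{M}\cup\{e_1\})$), whence it contains the Galois closure; on the other hand every definable class containing $\mathcal{M}$ contains $e_1$, hence ${\bf RVS}(\mathcal{M}\cup\{e_1\})$, and being ${\bf LC}$-closed and monotone it contains ${\bf LC}({\bf RVS}(\mathcal{M}\cup\{e_1\}))$.

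The remaining parts run along the identical pattern, with Theorem~1 replaced by the matching item of Corollary~1 and the ambient universe changed accordingly; the only care needed is the bookkeeping of operators. For $(ii)$ partial functions may be empty-valued, so $e_1$ is admissible and, since ordinary restrictive variable substitutions preserve partiality by Fact~1$(ii)$, the correct composite is ${\bf pLC}\circ{\bf RVS}$ applied to $\mathcal{M}\cup\{e_1\}$, exactly matching Corollary~1$(a)$. For $(iii)$ and $(iv)$ the classes consist of total, respectively single-valued, functions, so $e_1$ cannot occur; here non-empty restrictive variable substitutions must be used, as these preserve totality and single-valuedness by Fact~1$(i),(iii)$, giving the composites ${\bf tLC}\circ{\bf RVS}^t$ and ${\bf sLC}\circ{\bf RVS}^t$ with no $e_1$ adjoined, matching Corollary~1$(b),(c)$. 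In each case Proposition~1 guarantees that these composites are the least-closed-set operators for precisely the closure conditions of the corresponding corollary, and the two-inclusion argument of $(i)$ transfers verbatim.

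The main obstacle, and the one genuine point of the proof, is establishing that applying the substitution closure once and then the local-covering closure once already produces a class closed under \emph{both} operations, so that the two closures need not be interleaved or iterated. This is exactly the content of Proposition~1$(iii),(iv)$ (and their evident partial and single-valued analogues, obtained as in the proof of that proposition): applying ${\bf LC}$ to an already ${\bf RVS}$-closed class keeps it ${\bf RVS}$-closed. Once this commutation-type fact is secured, the identification of each right-hand side with the least bi-closed class containing $\mathcal{M}$ is immediate, and the four equalities follow from the characterization theorems by the standard argument that two least sets determined by the same closure conditions coincide.
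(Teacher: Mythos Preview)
Your proposal is correct and follows essentially the same approach as the paper: both identify each Galois closure as the smallest definable class containing $\mathcal{M}$, invoke Theorem~1 and Corollary~1 to translate definability into the three closure conditions, and then use Proposition~1 (specifically the commutation facts $(iii)$ and $(iv)$ and the remark following it) to recognize the right-hand composites as the least classes satisfying those conditions. Your explicit two-inclusion verification and your remark about the partial and single-valued analogues of Proposition~1$(iii),(iv)$ make the argument slightly more detailed than the paper's, but the route is the same.
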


\begin{proof}
To see that $(i)$ holds, note first that ${\bf mFSC}({\bf CSF}(\mathcal{M}))$
is the smallest Galois closed set of multivalued functions containing $\mathcal{M}$. 
Thus it follows from Theorem 1 that ${\bf mFSC}({\bf CSF}(\mathcal{M}))$ is the 
is the smallest class containing $\mathcal{M}\cup \{e_1\}$,
which is closed under local coverings and closed under restrictive variable substitutions.
By the comments following the proof of Proposition 1, we get 
${\bf mFSC}({\bf CSF}(\mathcal{M}))={\bf LC}({\bf RVS}(\mathcal{M}\cup \{e_1\}))$.
In other words, $(i)$ holds. 
The proof of $(ii)$, $(iii)$ and $(iv)$ can be obtained similarly, using  
Corollary 1 and Proposition 1. 
\end{proof}

\section{Galois closed Sets of Relational Constraints}

In order to describe the dual closed sets, i.e. the sets of constraints characterized by multivalued functions, 
we need some terminology, in addition to that introduced in \cite{CF}.

Consider arbitrary sets $A,B,C$ and $D$, and let $f:A\rightarrow B$ and $g:C\rightarrow D$ be maps.   
The \emph{concatenation} of $g$ with $f$,
denoted $gf$, is defined to be the map with domain $f^{-1}[B\cap C]$ and codomain $D$ given by 
$(gf)(a)=g(f(a))$ for all $a\in f^{-1}[B\cap C]$. Note that if $B=C$, then the concatenation $gf$ is
simply the \emph{composition} of $g$ with $f$, i.e. $(gf)(a)=g(f(a))$ for all $a\in A$.
As in the particular case of composition, concatenation is associative. 

If $(g_i)_{i\in I}$ is a non-empty family of maps $g_i:A_i\rightarrow B_i$, 
where $(A_i)_{ i\in I}$ is a family of pairwise disjoint sets,
then their (\emph{piecewise}) \emph{sum}, denoted  ${\Sigma }_{i\in I}g_i$, is the map from 
${\cup }_{i\in I}A_i$ to ${\cup }_{i\in I}B_i$
 whose restriction to each $A_i$ agrees with $g_i$. 
 We also use $g_1+g_2$ to denote the 
sum of $g_1$ and $g_2$. In particular, if $B_1=B_2=B^n$, and $g_1$ and $g_2$ are the vector-valued functions
 $g_1=(g_{1}^1\ldots g_{1}^n)$ and 
 $g_2=(g_{2}^1\ldots g_{2}^n)$, where for each $1\leq j\leq n$, $g_{1}^j:A_1\rightarrow B$ and 
$g_{2}^j:A_2\rightarrow B$, then their sum is defined componentwise, i.e. $g_1+g_2$ is the 
vector-valued function defined by 
$(g_1+g_2)({\bf a})=((g_{1}^1+g_{2}^1)({\bf a})\ldots (g_{1}^n+g_{2}^n)({\bf a}))$,
 for every ${\bf a}\in A_1\cup A_2$.     
Clearly, piecewise sum is associative and commutative, and it is not difficult to see that
concatenation is distributive over sum.

Let $m$ be a positive integer (viewed as an ordinal), 
$(n_j)_{j\in J}$ be a non-empty family of positive integers (also viewed as ordinals),
 and let $V$ be an arbitrary set disjoint from $m$ and each member of $(n_j)_{j\in J}$. 
Any non-empty family $H=(h_j)_{j\in J}$ of maps $h_j:n_j\rightarrow m\cup V$,
 is called a \emph{minor formation scheme} with \emph{target} $m$,
 \emph{indeterminate set} $V$ and \emph{source family} $(n_j)_{j\in J}$.
If the indeterminate set $V$
is empty, i.e. for each $j\in J$, the maps $h_j$ have codomain $m$, then we say that the 
minor formation scheme $H=(h_j)_{j\in J}$ is \emph{simple}.

An $m$-ary $A$-to-$B$ constraint $(R,S)$ is said to be a \emph{conjunctive minor} of a non-empty family 
$(R_j,S_j)_{j\in J}$ of $A$-to-$B$ constraints (of various arities) \emph{via a scheme $H=(h_j)_{j\in J}$},
 if for every $m$-tuples ${\bf a}\in A^m$ and ${\bf b}\in B^m$, 
\begin{itemize}
\item[(a)] ${\bf a}\in R $ implies that there is a map $\sigma _A:V\rightarrow A$ such that,
 for all $j$ in $J$, we have $[({\bf a}+\sigma )h_j]\in R_j $, and
\item[(b)] if there is a map $\sigma _B:V\rightarrow B$ such that,
 for all $j$ in $J$, we have $[({\bf b}+\sigma )h_j]\in S_j $, then ${\bf b}\in S $.
\end{itemize}
The maps $\sigma _A$ and $\sigma _B$ are called \emph{Skolem maps}. If $(a)$ and $(b)$ hold
with "if and only if" replacing "implies" and "if", respectively, then $(R,S)$
 is called a \emph{tight conjunctive minor} of the family 
$(R_j,S_j)_{j\in J}$. (See \cite{CF} for further background.)
 If the minor formation scheme $H$ is simple,
then we say that $(R,S)$ is a \emph{weak conjunctive minor} of the family 
$(R_j,S_j)_{j\in J}$. Furthermore, if the scheme $H$ consists of identity maps on $m$, then $(R,S)$ is said
to be obtained by \emph{intersecting antecedents} and \emph{intersecting consequents} of the constraints in 
the family $(R_j,S_j)_{j\in J}$. In addition, if $J=\{0\}$, then conditions $(a)$ and $(b)$ above,
reduce to $R\subseteq R_0$ and $S\supseteq S_0$, respectively, and in this case
$(R,S)$ is called a \emph{relaxation} of $(R_0,S_0)$. We shall refer to relaxations $(R,S)$ with finite antecedent $R$
as \emph{finite relaxations}.

\begin{main1}
If $(R,S)$ is a conjunctive minor of a non-empty family $(R_j,S_j)_{j\in J}$ 
of $A$-to-$B$ constraints, and, for each $j\in J$, $(R_j,S_j)$ is a conjunctive minor of a non-empty family 
$(R_{j}^i,S_{j}^i)_{i\in I_j}$,
then $(R,S)$ is a conjunctive minor of the non-empty family $(R_{j}^i,S_{j}^i)_{j\in J,i\in I_j}$.
\end{main1}

\begin{proof}
The proof of the Transitivity Lemma follows as the proof of Claim 1 in \cite{CF} 
(see proof of Theorem 2), but the ordinals $m$ and $n_j$, for each $j\in J$, are assumed to be finite.

Suppose that $(R,S)$ is an $m$-ary conjunctive minor of the family $(R_j,S_j)_{j\in J}$ via 
a scheme $H=(h_j)_{j\in J}$, $h_j:n_j\rightarrow m\cup V$, and, for each $j\in J$,
 $(R_j,S_j)$ is an $n_j$-ary conjunctive minor of the family 
$(R_{j}^i,S_{j}^i)_{i\in I_j}$ via a scheme $H_j=(h_{j}^i)_{i\in I_j}$, 
$h_{j}^i:n_{j}^i\rightarrow n_j\cup V_j$, where the $V_j$'s are pairwise disjoint.
 
Consider the minor formation scheme $K=(k_{j}^i)_{j\in J, i\in I_j}$ defined as follows:
\begin{itemize}
\item[(i)] the target of $K$ is the target $m$ of $H$, 
\item[(ii)] the source family of $K$ is $(n_{j}^i)_{j\in J, i\in I_j}$, 
\item[(iii)] the indeterminate set of $K$ is $U=V\cup ({\cup }_{j\in J}V_j)$,
\item[(iv)] $k_{j}^i:n_{j}^i\rightarrow m\cup U$ is defined by
\begin{displaymath}
k_{j}^i=(h_j+\iota _{V_jU})h_{j}^i  
 \end{displaymath} 
\end{itemize}  where $\iota _{V_jU}$ is the canonical injection (inclusion map) on $V_j$ to $U$.
We show that $(R,S)$ is a conjunctive minor of the family $(R_{j}^i,S_{j}^i)_{j\in J,i\in I_j}$
 via the scheme $K=(k_{j}^i)_{j\in J, i\in I_j}$.

If ${\bf a}$ is an $m$-tuple in $R$, then there is a Skolem map $\sigma :V\rightarrow A$
 such that for all $j$ in $J$, $({\bf a}+\sigma )h_j\in R_j$.
Thus, for every $j$ in $J$, 
there are Skolem maps ${\sigma }_j:V_j\rightarrow A$ such that for every $i\in I_j$, we have
$[({\bf a}+\sigma )h_j+\sigma _j]h_{j}^i\in R_{j}^i$.

As in the proof of Claim 1 in \cite{CF}, let $\tau :U\rightarrow A$ be the Skolem map 
defined by $\tau =\sigma +{\Sigma }_{l\in J}{\sigma }_l$. By the fact that concatenation 
is associative and distributive over sum, 
it follows that for every $j\in J$ and $i\in I_j$, 
\begin{displaymath}
  ({\bf a}+\tau ){k_{j}^i}=({\bf a}+\sigma +{\Sigma }_{l\in J}{\sigma }_l)(h_j+\iota _{UV_j})h_{j}^i=
[({\bf a}+\sigma )h_j+\sigma _j]h_{j}^i .
 \end{displaymath} 
 Thus, for every $j\in J$ and $i\in I_j$, we have $({\bf a}+\tau ){k_{j}^i}\in R_{j}^i$. 

Now suppose that ${\bf b}$ is an $m$-tuple over $B$, for wich there is a Skolem map $\tau :U\rightarrow B$
such that for every $j\in J$ and $i\in I_j$, $({\bf b}+\tau ){k_{j}^i}$ is in $S_{j}^i$.
Consider the Skolem maps $\sigma :V\rightarrow B$ and 
${\sigma }_j:V_j\rightarrow B$ for every $j\in J$, such that $\tau =\sigma +{\Sigma }_{j\in J}{\sigma }_j$.
Again, by associativity and distributivity it follows that for every $j\in J$ and $i\in I_j$, 
$[({\bf b}+\sigma )h_j+\sigma _j]h_{j}^i=({\bf b}+\tau ){k_{j}^i}\in S_{j}^i$. 
Hence, for every $j\in J$, we have $({\bf b}+\sigma )h_j\in S_j$, and thus we conclude 
${\bf b}\in S$.
\end{proof}

Note that if $H$ is simple and, for every $j\in J$, the schemes $H_j$ are simple, then the scheme $K$ defined in the
proof above is also simple.
Thus we get:

\begin{main2}
If $(R,S)$ is a weak conjunctive minor of a non-empty family $(R_j,S_j)_{j\in J}$ 
of $A$-to-$B$ constraints, and, for each $j\in J$, $(R_j,S_j)$ is a weak conjunctive minor of a non-empty family 
$(R_{j}^i,S_{j}^i)_{i\in I_j}$,
then $(R,S)$ is a weak conjunctive minor of the non-empty family $(R_{j}^i,S_{j}^i)_{j\in J,i\in I_j}$.
\end{main2}

A set $\mathcal{T}$ of relational constraints is said to be 
\emph{closed under formation of} (\emph{weak}) \emph{conjunctive minors}
if whenever every member of a non-empty family $(R_j,S_j)_{j\in J}$ of constraints
 is in $\mathcal{T}$, then every (weak) conjunctive minor of the family $(R_j,S_j)_{j\in J}$ is
 also in $\mathcal{T}$. 
For any set of constraints $\mathcal{T}$, we denote by ${\bf CM}(\mathcal{T})$ 
the smallest set of constraints containing $\mathcal{T}$, 
and closed under formation of ``conjunctive minors". 
Similarly, we define ${\bf wCM}(\mathcal{T})$ to be
the smallest set of constraints containing $\mathcal{T}$, and 
closed under formation of weak conjunctive minors. 

By the Transitivity Lemma it follows that ${\bf CM}(\mathcal{T})$ is the set of all  
conjunctive minors of families of constraints in 
$\mathcal{T}$, and ${\bf CM}({\bf CM}(\mathcal{T}))={\bf CM}(\mathcal{T})$. Analogously, by 
the Transitivity Lemma for weak minors, it follows that ${\bf wCM}(\mathcal{T})$ is the set of all  
weak conjunctive minors of families of constraints in  
$\mathcal{T}$, and ${\bf wCM}({\bf wCM}(\mathcal{T}))={\bf wCM}(\mathcal{T})$. In other words, both 
$\mathcal{T}\mapsto {\bf CM}(\mathcal{T})$ and 
$\mathcal{T}\mapsto {\bf wCM}(\mathcal{T})$ are idempotent maps.
Furthermore, both $\mathcal{T}\mapsto {\bf CM}(\mathcal{T})$ and 
$\mathcal{T}\mapsto {\bf wCM}(\mathcal{T})$ are monotone and extensive (in the sense of Section 2),
 and hence, we have:

 \begin{fact} 
The operators $\mathcal{T}\mapsto {\bf CM}(\mathcal{T})$ and 
$\mathcal{T}\mapsto {\bf wCM}(\mathcal{T})$ are closure operators on the set of all $A$-to-$B$ constraints. 
\end{fact}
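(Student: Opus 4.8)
The plan is to verify the three defining properties of a closure operator---extensivity, monotonicity, and idempotency (conditions E, M, I of Section 2, now read on the lattice of all sets of $A$-to-$B$ constraints)---for each of $\mathbf{CM}$ and $\mathbf{wCM}$. Since the two cases are formally identical, with the Transitivity Lemma replaced by the Transitivity Lemma for weak minors, I would carry out the argument for $\mathbf{CM}$ in full and then observe that the weak case follows verbatim.

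For extensivity and monotonicity I would appeal directly to the characterization of ${\bf CM}(\mathcal{T})$ as the \emph{smallest} set of constraints that contains $\mathcal{T}$ and is closed under formation of conjunctive minors. Extensivity, $\mathcal{T}\subseteq {\bf CM}(\mathcal{T})$, is then immediate. For monotonicity, given $\mathcal{T}\subseteq \mathcal{T}'$, I would note that ${\bf CM}(\mathcal{T}')$ is a set closed under conjunctive minors that contains $\mathcal{T}'$, hence contains $\mathcal{T}$; as ${\bf CM}(\mathcal{T})$ is the least such set, $\mathbf{CM}(\mathcal{T})\subseteq {\bf CM}(\mathcal{T}')$.

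The substantive step is idempotency, and this is exactly where the Transitivity Lemma enters. I would first show that ${\bf CM}(\mathcal{T})$ coincides with the set $\mathcal{C}$ of all conjunctive minors of non-empty families of constraints drawn from $\mathcal{T}$. The inclusion $\mathcal{C}\subseteq {\bf CM}(\mathcal{T})$ holds because ${\bf CM}(\mathcal{T})$ contains $\mathcal{T}$ and is closed under conjunctive minors. For the reverse inclusion it suffices to check that $\mathcal{C}$ itself contains $\mathcal{T}$ and is closed: each constraint is a conjunctive minor of the singleton family consisting of itself (the relaxation with identity scheme and $R=R_0$, $S=S_0$), so $\mathcal{T}\subseteq \mathcal{C}$; and if $(R,S)$ is a conjunctive minor of a family $(R_j,S_j)_{j\in J}$ with each $(R_j,S_j)\in \mathcal{C}$, then each $(R_j,S_j)$ is a conjunctive minor of some family $(R_j^i,S_j^i)_{i\in I_j}$ of constraints in $\mathcal{T}$, and the Transitivity Lemma yields that $(R,S)$ is a conjunctive minor of the combined non-empty family $(R_j^i,S_j^i)_{j\in J,\,i\in I_j}$, all of whose members lie in $\mathcal{T}$, so $(R,S)\in \mathcal{C}$. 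Once ${\bf CM}(\mathcal{T})=\mathcal{C}$ is established, this same closure property shows that ${\bf CM}(\mathcal{T})$ is closed under conjunctive minors, whence ${\bf CM}({\bf CM}(\mathcal{T}))={\bf CM}(\mathcal{T})$.

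The $\mathbf{wCM}$ case runs identically, using the Transitivity Lemma for weak minors together with the observation (recorded just before the statement) that the composite scheme $K=(h_j+\iota_{V_jU})h_j^i$ is simple whenever the constituent schemes are. I do not expect any real obstacle: extensivity and monotonicity are formal consequences of the ``smallest set'' definition, and all the genuine content---that the composite of two conjunctive-minor formations is again a single conjunctive-minor formation---has already been discharged by the Transitivity Lemma. The only points demanding a moment's care are that every constraint is a conjunctive minor of itself and that the combined index family $\{(j,i):j\in J,\ i\in I_j\}$ is non-empty, as required by the hypotheses of the Transitivity Lemma; both are straightforward since $J$ and each $I_j$ are non-empty.
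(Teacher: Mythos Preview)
Your proposal is correct and follows essentially the same approach as the paper: the paper remarks that extensivity and monotonicity are immediate, and derives idempotency from the Transitivity Lemma (resp.\ its weak version) by first observing that ${\bf CM}(\mathcal{T})$ coincides with the set of all conjunctive minors of families drawn from $\mathcal{T}$. Your write-up simply makes explicit the short verifications the paper leaves implicit.
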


The following technical result shows that the sets of constraints characterized by multivalued functions, and by total
multivalued functions must be closed under formation of weak conjunctive minors,
 and closed under formation of conjunctive minors, respectively.

\begin{lemma}
Let $(R_j,S_j)_{j\in J}$ be a non-empty family of $A$-to-$B$ constraints.
If $f:A^n\rightarrow \mathcal{P}(B)$ satisfies every $(R_j,S_j)$ then $f$ satisfies every 
weak conjunctive minor of the family $(R_j,S_j)_{j\in J}$. Futhermore, if $f$ is total, then
$f$ satisfies every conjunctive minor of the family $(R_j,S_j)_{j\in J}$. 
\end{lemma}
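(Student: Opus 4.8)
The plan is to unwind satisfaction down to individual tuples and then transport witnesses through the scheme. Recall that $f$ satisfies $(R,S)$ means $fR\subseteq S$, and that $\mathbf{b}\in fR$ precisely when there are tuples $\mathbf{a}^1,\ldots ,\mathbf{a}^n\in R$ with $\mathbf{b}(i)\in f(\mathbf{a}^1(i)\ldots \mathbf{a}^n(i))$ for every $i\in m$. Hence, to show that $f$ satisfies a given (weak) conjunctive minor $(R,S)$, I would fix an arbitrary $\mathbf{b}\in fR$ together with witnessing tuples $\mathbf{a}^1,\ldots ,\mathbf{a}^n\in R$ and derive $\mathbf{b}\in S$ by establishing the hypothesis of clause $(b)$ in the definition of conjunctive minor.

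For the first (weak) assertion, let $H=(h_j)_{j\in J}$ be the simple scheme, so each $h_j\colon n_j\to m$ and no Skolem maps occur. For every $j\in J$ and every $t$ with $1\le t\le n$, form the concatenation $\mathbf{a}^t h_j$; by clause $(a)$ each $\mathbf{a}^t h_j$ belongs to $R_j$. I would then check componentwise that $\mathbf{b}h_j$ is witnessed by $\mathbf{a}^1 h_j,\ldots ,\mathbf{a}^n h_j$: for $k\in n_j$, setting $i=h_j(k)$ gives $(\mathbf{b}h_j)(k)=\mathbf{b}(i)\in f(\mathbf{a}^1(i)\ldots \mathbf{a}^n(i))=f((\mathbf{a}^1 h_j)(k)\ldots (\mathbf{a}^n h_j)(k))$. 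Thus $\mathbf{b}h_j\in fR_j\subseteq S_j$ for each $j$, and clause $(b)$ forces $\mathbf{b}\in S$. Notice that totality is never invoked here.

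For the second assertion the scheme $H$ may have a non-empty indeterminate set $V$, and the essential new ingredient is to build a single consequent-side Skolem map $\sigma_B\colon V\to B$. Applying clause $(a)$ to each witnessing tuple $\mathbf{a}^t\in R$ produces a Skolem map $\sigma^t\colon V\to A$ with $(\mathbf{a}^t+\sigma^t)h_j\in R_j$ for all $j$; crucially these maps may differ with $t$. I would define $\sigma_B$ \emph{diagonally}, choosing for each $v\in V$ some element $\sigma_B(v)\in f(\sigma^1(v)\ldots \sigma^n(v))$. This step, and only this step, uses totality of $f$: it guarantees that each set $f(\sigma^1(v)\ldots \sigma^n(v))$ is non-empty, so the selection is possible (for infinite $V$ this is a use of choice). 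With $\sigma_B$ fixed, I would verify for each $j$ that $(\mathbf{b}+\sigma_B)h_j$ is witnessed by the tuples $(\mathbf{a}^t+\sigma^t)h_j\in R_j$, splitting componentwise according to whether $h_j(k)=i\in m$, where membership reduces to the datum $\mathbf{b}(i)\in f(\mathbf{a}^1(i)\ldots \mathbf{a}^n(i))$, or $h_j(k)=v\in V$, where it reduces to the chosen $\sigma_B(v)\in f(\sigma^1(v)\ldots \sigma^n(v))$. This yields $(\mathbf{b}+\sigma_B)h_j\in fR_j\subseteq S_j$ for all $j$, whence clause $(b)$ gives $\mathbf{b}\in S$.

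The main obstacle is exactly the manufacture of the single map $\sigma_B$ from the $n$ antecedent-side maps $\sigma^1,\ldots ,\sigma^n$: clause $(a)$ supplies one Skolem map per tuple of $R$, whereas clause $(b)$ demands a single Skolem map working for all $j$ at once, and this asymmetry can be bridged only when $f$ is total. This is precisely why the general (non-weak) half fails for arbitrary multivalued $f$ and holds only for total $f$. The remaining bookkeeping — associativity of concatenation and its distributivity over piecewise sums, needed to identify $((\mathbf{b}+\sigma_B)h_j)(k)$ and $((\mathbf{a}^t+\sigma^t)h_j)(k)$ with the appropriate $\mathbf{b}$-, $\sigma_B$-, $\mathbf{a}^t$- and $\sigma^t$-values — is routine.
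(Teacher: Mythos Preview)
Your proposal is correct and follows essentially the same approach as the paper: both arguments reduce to composing the witnessing tuples with each $h_j$, and in the total case both construct the consequent-side Skolem map diagonally by choosing $\sigma_B(v)\in f(\sigma^1(v)\ldots\sigma^n(v))$, using totality at exactly that point. The only differences are cosmetic---you treat the weak case first and phrase things elementwise via $\mathbf{b}\in fR$, whereas the paper proves the total case first and works with the inclusion $f(\mathbf{a}^1\ldots\mathbf{a}^n)\subseteq S$---but the underlying mechanism is identical.
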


\begin{proof}
First we prove the last claim, which generalizes Lemma 1 in \cite{CF}, 
to total multivalued functions.
Let $f$ be a total multivalued function, say of arity $n$, satisfying
 every member of a non-empty family $(R_j,S_j)_{j\in J}$ of $A$-to-$B$ constraints, and let
 $(R,S)$ be an $m$-ary conjunctive minor of the family $(R_j,S_j)_{j\in J}$ via a scheme  
$H=(h_j)_{j\in J}$, $h_j:n_j\rightarrow m\cup V$.
We show that for every ${\bf a}^1\ldots {\bf a}^n\in R$, 
 the $m$-ary relation $f({\bf a}^1\ldots {\bf a}^n)$ is contained in $S$, i.e. $f$ satisfies $(R,S)$.
So let ${\bf a}^1\ldots {\bf a}^n$ be any $m$-tuples in $R$. 
Observe that for each $1\leq i\leq n$, 
there is a Skolem map ${\sigma }_i:V\rightarrow A$, such that for every $j$ in $J$,
$({\bf a}^i+\sigma _i)h_j$ is in $R_j$. 
Since $f$ satisfies every member of $(R_j,S_j)_{j\in J}$, we have that for every $j$ in $J$, 
$f[({\bf a}^1+{\sigma }_1)h_j\ldots ({\bf a}^n+{\sigma }_n)h_j]\subseteq S_j$.

Now, suppose that ${\bf b}\in f({\bf a}^1\ldots {\bf a}^n)$.
Since $f$ is a total multivalued function, there is a Skolem map 
$\sigma :V\rightarrow B$ such that, for every $v\in V$,
 $\sigma (v)$ belongs to $f({\sigma }_1(v)\ldots {\sigma }_n(v))$. 
Fix such a Skolem map $\sigma :V\rightarrow B$.
 By associativity and distributivity of concatenation over sum,
we have that for each $j$ in $J$,
\begin{displaymath}
({\bf b}+\sigma )h_j\in [f({\bf a}^1\ldots {\bf a}^n)+f({\sigma }_1\ldots {\sigma }_n)]h_j
=f[({\bf a}^1+{\sigma }_1)h_j\ldots ({\bf a}^n+{\sigma }_n)h_j]\subseteq S_j.
\end{displaymath}
Since $(R,S)$ is a conjunctive minor of $(R_j,S_j)_{j\in J}$
 via the scheme $H=(h_j)_{j\in J}$, we conclude that ${\bf b}\in S$, 
 which completes the proof of the last statement of Lemma 1.

To prove the first claim of Lemma 1, suppose that $f:A^n\rightarrow \mathcal{P}(B)$ is a multivalued
function, not necessarily total, satisfying
 every member of $(R_j,S_j)_{j\in J}$, and assume that
 $(R,S)$ is a weak conjunctive minor of the family $(R_j,S_j)_{j\in J}$, say via a scheme  
$H=(h_j)_{j\in J}$, where $h_j:n_j\rightarrow m$ for every $j$ in $J$.

As before, we prove that $f$ satisfies $(R,S)$, by showing that for every ${\bf a}^1\ldots {\bf a}^n$ in $R$, 
we have $f({\bf a}^1\ldots {\bf a}^n)\subseteq S$. 
Clearly, if $f(({\bf a}^1\ldots {\bf a}^n)(i))=\emptyset $, for some $i\in m$,  
 then $f({\bf a}^1\ldots {\bf a}^n)\subseteq S$. 
So we may assume that $f(({\bf a}^1\ldots {\bf a}^n)(i))\not=\emptyset $, for every $i\in m$.
As before, for each $j$ in $J$, the $n_j$-tuples
 ${\bf a}^1h_j\ldots {\bf a}^nh_j$ belong to $R_j$, and
since $f$ satisfies each $(R_j,S_j)$, we have that $f({\bf a}^1h_j\ldots {\bf a}^nh_j)\subseteq  S_j$, for every $j\in J$.
By associativity, it follows that for each $j$ in $J$,
\begin{displaymath}
[f({\bf a}^1\ldots {\bf a}^n)]h_j
=f[({\bf a}^1\ldots {\bf a}^n)h_j]=f({\bf a}^1h_j\ldots {\bf a}^nh_j)\subseteq S_j.
\end{displaymath}
Therefore, if ${\bf b}\in f({\bf a}^1\ldots {\bf a}^n)$, then for every $j\in J$,
we have ${\bf b}h_j\in S_j$, which implies ${\bf b}\in S$,
 because $(R,S)$ is a weak conjunctive minor of $(R_j,S_j)_{j\in J}$
 via the scheme $H=(h_j)_{j\in J}$. Thus $f({\bf a}^1\ldots {\bf a}^n)$ is indeed contained in $S$, and
  the proof of Lemma 1 is complete.
\end{proof}

In order to describe the Galois closed sets of constraints,
we need to recall a further condition, introduced in \cite{CF}, which expresses "compactness" on the sets of 
these dual objects.
A set $\mathcal{T}$ of relational constraints is said to be \emph{locally closed} if $\mathcal{T}$ 
contains every constraint $(R,S)$ such that the set of all its finite relaxations, is contained in $\mathcal{T}$.
In analogy with Section 3, we denote by ${\bf LO}(\mathcal{T})$, the smallest locally closed
set of constraints containing $\mathcal{T}$.
Similarly to the closure $\bf LC$ defined on classes of function classes, 
${\bf LO}(\mathcal{T})$ is the set of
constraints obtained from $\mathcal{T}$ by adding all those constraints whose finite relaxations
 are all in $\mathcal{T}$. As an immediate consequence, we have:

 \begin{fact} 
The operator $\mathcal{T}\mapsto {\bf LO}(\mathcal{T})$ is a closure operator
 on the set of all $A$-to-$B$ constraints. 
\end{fact}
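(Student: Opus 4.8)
The plan is to verify directly that $\mathcal{T}\mapsto {\bf LO}(\mathcal{T})$ satisfies the three defining properties of a closure operator recalled in Section~2: extensivity (E), monotonicity (M), and idempotency (I). I would work throughout from the explicit description given just above the statement, namely that ${\bf LO}(\mathcal{T})$ consists of $\mathcal{T}$ together with every constraint all of whose finite relaxations already lie in $\mathcal{T}$. The one structural fact I would isolate first, and then reuse, is that any constraint $(R',S')$ with \emph{finite} antecedent $R'$ is a finite relaxation of \emph{itself}, since $R'\subseteq R'$ and $S'\supseteq S'$; in other words, being a finite relaxation is reflexive on finite-antecedent constraints.

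Extensivity is immediate from the description, as ${\bf LO}(\mathcal{T})$ is obtained by \emph{adjoining} constraints to $\mathcal{T}$, so $\mathcal{T}\subseteq {\bf LO}(\mathcal{T})$. For monotonicity, I would assume $\mathcal{T}'\subseteq \mathcal{T}$ and take $(R,S)\in {\bf LO}(\mathcal{T}')$: either $(R,S)\in \mathcal{T}'\subseteq \mathcal{T}\subseteq {\bf LO}(\mathcal{T})$, or every finite relaxation of $(R,S)$ lies in $\mathcal{T}'\subseteq \mathcal{T}$, whence again $(R,S)\in {\bf LO}(\mathcal{T})$. Both cases yield ${\bf LO}(\mathcal{T}')\subseteq {\bf LO}(\mathcal{T})$.

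The only step demanding care is idempotency, where the reflexivity fact does the real work. The inclusion ${\bf LO}(\mathcal{T})\subseteq {\bf LO}({\bf LO}(\mathcal{T}))$ is just extensivity applied to ${\bf LO}(\mathcal{T})$. For the reverse, I would take $(R,S)\in {\bf LO}({\bf LO}(\mathcal{T}))$; if $(R,S)\in {\bf LO}(\mathcal{T})$ there is nothing to prove, so assume every finite relaxation $(R',S')$ of $(R,S)$ lies in ${\bf LO}(\mathcal{T})$. Fixing such an $(R',S')$, its antecedent $R'$ is finite, so $(R',S')$ is a finite relaxation of itself; hence from $(R',S')\in {\bf LO}(\mathcal{T})$ it follows, in either of the two possible cases of the description, that $(R',S')\in \mathcal{T}$. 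Thus all finite relaxations of $(R,S)$ already lie in $\mathcal{T}$, so $(R,S)\in {\bf LO}(\mathcal{T})$, giving ${\bf LO}({\bf LO}(\mathcal{T}))\subseteq {\bf LO}(\mathcal{T})$.

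I expect no genuine obstacle: the entire content is the recognition that finite relaxations behave reflexively, which is precisely the finite-support phenomenon underlying local closure and the reason the statement is flagged as an immediate consequence. As a cleaner alternative I might instead verify that the family of locally closed sets is closed under arbitrary intersections (if all finite relaxations of $(R,S)$ lie in each $\mathcal{T}_k$, then by local closure $(R,S)\in \mathcal{T}_k$ for each $k$, hence in the intersection) and contains the full set of all $A$-to-$B$ constraints; this exhibits the locally closed sets as a Moore family, and the closure-operator properties of ${\bf LO}$ then follow from the standard correspondence recalled in Section~2.
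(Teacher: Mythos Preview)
Your proposal is correct. The paper gives no explicit proof of this fact at all: it simply records the explicit description of ${\bf LO}(\mathcal{T})$ (the constraints in $\mathcal{T}$ together with those whose finite relaxations all lie in $\mathcal{T}$) and declares the closure-operator properties an ``immediate consequence''. Your direct verification of extensivity, monotonicity, and idempotency from that description, with the reflexivity of finite relaxations on finite-antecedent constraints doing the work for idempotency, is precisely the routine check the paper suppresses; the Moore-family alternative you sketch is equally in the intended spirit, since the paper's primary definition of ${\bf LO}(\mathcal{T})$ is in fact ``the smallest locally closed set containing $\mathcal{T}$''.
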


Note that in the case of finite underlying sets $A$ and $B$, the induced operator in Fact 3 is the identity
map, i.e. every set of constraints is locally closed.

\begin{theorem}
Consider arbitrary non-empty sets $A$ and $B$.
Let $\mathcal{T}$ be a set of $A$-to-$B$ relational constraints. Then the following are equivalent:
\begin{itemize}
\item[(i)]$\mathcal{T}$ is locally closed, contains the unary empty constraint $(\emptyset ,\emptyset )$ 
and the unary trivial constraint $(A,B)$, 
 and is closed under formation of weak conjunctive minors;
\item[(ii)]$\mathcal{T}$ is characterized by some set of multivalued functions on $A$ to $B$.
\end{itemize}
\end{theorem}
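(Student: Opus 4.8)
The plan is to prove the two implications separately, taking the candidate set of functions to be $\mathcal{M}={\bf mFSC}(\mathcal{T})$ in the nontrivial direction.

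For $(ii)\Rightarrow(i)$, assume $\mathcal{T}={\bf CSF}(\mathcal{M})$ for some class $\mathcal{M}$. The two membership requirements are immediate: since every multivalued function satisfies both $(\emptyset,\emptyset)$ and $(A,B)$ (as recorded in Section 2), both constraints lie in ${\bf CSF}(\mathcal{M})=\mathcal{T}$. Closure under formation of weak conjunctive minors is precisely the first assertion of Lemma 1: if every member of a family $(R_j,S_j)_{j\in J}$ belongs to $\mathcal{T}$, then each $f\in\mathcal{M}$ satisfies all of them, hence satisfies every weak conjunctive minor of the family, so every such minor lies in ${\bf CSF}(\mathcal{M})=\mathcal{T}$. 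For local closure I would argue by contradiction: if some $f\in\mathcal{M}$ failed a constraint $(R,S)$ all of whose finite relaxations lie in $\mathcal{T}$, the failure $fR\not\subseteq S$ would be witnessed by finitely many tuples ${\bf a}^1,\ldots,{\bf a}^n\in R$, and then the finite relaxation $(\{{\bf a}^1,\ldots,{\bf a}^n\},S)$, being in $\mathcal{T}$ and hence satisfied by $f$, would force $f({\bf a}^1\ldots{\bf a}^n)\subseteq S$, a contradiction.

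For $(i)\Rightarrow(ii)$ I would show $\mathcal{T}={\bf CSF}({\bf mFSC}(\mathcal{T}))$. The inclusion $\subseteq$ is extensivity of the Galois closure, so the work lies in the reverse inclusion, which I prove contrapositively: given $(R,S)\notin\mathcal{T}$, I construct $f\in{\bf mFSC}(\mathcal{T})$ failing $(R,S)$. Local closure first yields a finite relaxation $(R',S')\notin\mathcal{T}$ with $R'\subseteq R$ finite and $S'\supseteq S$; since any constraint with empty antecedent is a weak conjunctive minor of $(\emptyset,\emptyset)$ and hence in $\mathcal{T}$, the antecedent $R'=\{{\bf a}^1,\ldots,{\bf a}^n\}$ is non-empty. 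As $\mathcal{T}$ contains $(A,B)$ and is closed under weak conjunctive minors, it contains $(A^m,B^m)$ and the relaxation $(R',B^m)$, so $\{Q:(R',Q)\in\mathcal{T}\}$ is non-empty; intersecting consequents (the identity-scheme weak minor) then gives $(R',S^*)\in\mathcal{T}$ for $S^*=\bigcap\{Q:(R',Q)\in\mathcal{T}\}$. Were $S^*\subseteq S'$, then $(R',S')$ would be a relaxation of $(R',S^*)$, hence in $\mathcal{T}$; so I fix a tuple ${\bf b}^*\in S^*\setminus S'$. Writing ${\bf c}_i=({\bf a}^1(i)\ldots{\bf a}^n(i))\in A^n$ for the $i$-th row, I define the $n$-ary function $f$ by $f({\bf x})=\{{\bf b}^*(i):i\in m,\ {\bf c}_i={\bf x}\}$, empty when no row equals ${\bf x}$. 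Then ${\bf b}^*\in\Pi_{i\in m}f({\bf c}_i)\subseteq fR'$ while ${\bf b}^*\notin S'\supseteq S$, so $f$ fails both $(R',S')$ and $(R,S)$.

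The hard part is checking that $f$ satisfies every $(P,Q)\in\mathcal{T}$, and the decisive design choice is that $f$ is built from the single witness ${\bf b}^*$ rather than from the coordinatewise projections of $S^*$, whose product would be too permissive. Given a $p$-ary $(P,Q)\in\mathcal{T}$ and tuples ${\bf x}^1,\ldots,{\bf x}^n\in P$, any ${\bf d}\in\Pi_{l\in p}f(({\bf x}^1\ldots{\bf x}^n)(l))$ selects, for each $l$, a row index $i_l$ with ${\bf c}_{i_l}=({\bf x}^1\ldots{\bf x}^n)(l)$ and ${\bf d}(l)={\bf b}^*(i_l)$ (the case of an empty factor being vacuous). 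Setting $h:p\to m$ by $h(l)=i_l$, I would verify ${\bf a}^j\circ h={\bf x}^j$ for each $j$, so that the canonical weak conjunctive minor of $(P,Q)$ via the simple scheme $(h)$ has antecedent containing $R'$; this puts $(R',\{{\bf b}:{\bf b}\circ h\in Q\})$ in $\mathcal{T}$, whence $S^*\subseteq\{{\bf b}:{\bf b}\circ h\in Q\}$ and in particular ${\bf b}^*\circ h\in Q$. Since ${\bf d}={\bf b}^*\circ h$ by construction, this gives ${\bf d}\in Q$. The one point needing care throughout is the bookkeeping that matches rows of $R'$ with columns of the $P$-configuration through $h$.
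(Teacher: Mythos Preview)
Your proof is correct and follows essentially the same route as the paper's. The only cosmetic difference is in how the witness tuple is selected: the paper observes that $(F,S_0)$ is obtained by intersecting consequents of the family $(F,B^m\setminus\{{\bf s}\})_{{\bf s}\notin S_0}$ and hence some $(F,B^m\setminus\{{\bf s}\})$ lies outside $\mathcal{T}$, whereas you form $S^*=\bigcap\{Q:(R',Q)\in\mathcal{T}\}$ and pick ${\bf b}^*\in S^*\setminus S'$; these two choices are equivalent (your ${\bf b}^*$ is exactly an ${\bf s}$ with $(R',B^m\setminus\{{\bf s}\})\notin\mathcal{T}$, and vice versa), and the separating function and its verification via the tight weak minor through $h$ are identical to the paper's.
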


\begin{proof}
To see that $(ii)$ implies $(i)$, note first that every 
multivalued function satisfies the empty constraint $(\emptyset ,\emptyset )$,
 and the trivial constraint $(A,B)$.
 Also, from Lemma 1, it follows that every set of constraints characterized
by multivalued functions is closed under formation of weak conjunctive minors.
For the remainder, let $(R,S)$ be any constraint not in $\mathcal{T}$.  
 By $(ii)$ it follows that there is an $n$-ary multivalued function $f$ satisfying every constraint in 
$\mathcal{T}$ which does not satisfy $(R,S)$, i.e.  
 there are ${\bf a}^1\ldots {\bf a}^n\in R$, 
such that $f({\bf a}^1\ldots {\bf a}^n)\not\subseteq S$. 
Let $F$ be the subset of $R$ containing ${\bf a}^1\ldots {\bf a}^n$.
 Clearly, the constraint $(F,S)$ is a finite relaxation of 
$(R,S)$, and $(F,S)\not\in \mathcal{T}$. 
Since the above argument works for any constraint not in $\mathcal{T}$, we conclude that
 $\mathcal{T}$ is locally closed.

To prove implication $(i)\Rightarrow (ii)$, let 
$(R,S)$ be any constraint not in ${\mathcal{T}}$, say of arity $m$.
We show that there is a multivalued function separating $(R,S)$ from ${\mathcal{T}}$.
From the fact that $\mathcal{T}$ is locally closed, it follows that
there is a finite relaxation $(F,S_0)$ of $(R,S)$, say with $F$ of size $n$,
 which is not in $\mathcal{T}$. 
Observe that $(F,B^m)$ is a weak conjunctive minor of the 
 unary trivial constraint $(A,B)$, and so we must have $S_0\not=B^m$.
Also, $F$ can not be empty because $(\emptyset ^m,S_0)$ is a relaxation of the 
$m$-ary empty constraint, which in turn is a weak conjunctive minor of $(\emptyset ,\emptyset )$. 
From the fact that $(F,S_0)$ can be obtained from the family 
$(F,B^m\setminus \{{\bf s}\})_{{\bf s}\not\in S_0}$, by intersecting antecedents and intersecting consequents,
 it follows that there must exist an $m$-tuple ${\bf s}=(s_i\mid i\in m)$ in $B^m$ which
is not in $S_0$, and such that $(F,B^m\setminus \{{\bf s}\})$ 
does not belong to $\mathcal{T}$. 
Let ${\bf a}^1,\ldots ,{\bf a}^n$ be the $m$-tuples in $F$.

 We define a multivalued function which is not empty-valued on 
 \begin{displaymath}
D=\{({\bf a}^1\ldots {\bf a}^n)(i):i\in m\}
\end{displaymath} 
 but empty-valued on the remaining $n$-tuples of $A^n$.
 Formally, let $g$ be the $n$-ary multivalued function 
on $A$ to $B$ such that, for every $i\in m$, 
 \begin{displaymath}
g(({\bf a}^1\ldots {\bf a}^n)(i))=
\cup \{s_j:j\in m \textrm{ and } ({\bf a}^1\ldots {\bf a}^n)(j)=({\bf a}^1\ldots {\bf a}^n)(i)\}, 
\end{displaymath} 
and $g({\bf a})=\emptyset $ for every ${\bf a}\in A^n\setminus D$.
 From definition of $g$, it follows that ${\bf s}\in g({\bf a}^1\ldots {\bf a}^n)$,
 and thus $g$ does not satisfy $(F,S_0)$,
 and so it does not satisfy $(R,S)$.

Now we show that $g$ satisfies every member of ${\mathcal{T}}$.
For a contradiction, suppose that there is an $m_1$-ary member $(R_1,S_1)$ of ${\mathcal{T}}$,
 which is not satisfied by $g$.
Thus, for some ${\bf c}^1,\ldots ,{\bf c}^n \in R_1$ we have 
$g({\bf c}^1\ldots {\bf c}^n) \not\subseteq S_1$. 
Consider an $m_1$-tuple ${\bf s}_1\in g({\bf c}^1\ldots {\bf c}^n)\setminus  S_1$, and 
let $h:m_1\rightarrow m$ to be any map such that 
 \begin{displaymath}
{\bf s}_1(i)=({\bf s}h)(i).
\end{displaymath} 
 for every $i\in m_1$.
Note that for every $i\in m_1$, there is $j\in m$ such that 
$({\bf c}^1\ldots {\bf c}^n)(i)=({\bf a}^1\ldots {\bf a}^n)(j)$, for otherwise 
$g({\bf c}^1\ldots {\bf c}^n)$ would be empty
and so would be contained in $S_1$. In fact, from definition of $g$ and $h$, 
it follows that, for every $i\in m_1$,
 $({\bf c}^1\ldots {\bf c}^n)(i)=({\bf a}^1h\ldots {\bf a}^nh)(i)$.

Let $(R_h,S_h)$ be the $m$-ary weak conjunctive minor of $(R_1,S_1)$ via $H=\{h\}$,
 defined by 
\begin{itemize}
\item[(a)] for every ${\bf a}\in A^m$, ${\bf a}\in R_h$ if and only if ${\bf a}h\in R_1$ , and
\item[(b)] for every ${\bf b}\in B^m$, ${\bf b}\in S_h$ if and only if ${\bf b}h\in S_1$.
\end{itemize}
Observe that $(R_h,S_h)$ belongs to ${\mathcal{T}}$, because 
${\mathcal{T}}$ is closed under formation of weak conjunctive minors.

Since ${\bf a}^1h,\ldots ,{\bf a}^nh\in R_1$, we have ${\bf a}^1,\ldots ,{\bf a}^n\in R_h$, i.e. 
$F\subseteq R_h$. Also, ${\bf s}\not\in S_h$ because ${\bf s}_1\not\in S_1$.
 Therefore $(F,B^m\setminus \{{\bf s}\})$
 is a relaxation of $(R_h,S_h)$, and we conclude that 
$(F,B^{m}\setminus \{{\bf s}\})\in {\mathcal{T}}$, which is a contradiction.
Thus $g$ is indeed a multivalued function separating $(R,S)$ from  ${\mathcal{T}}$.
\end{proof}

In Section 2, we observed that every partial function satisfies the binary equality constraint, thus
any set of constraints characterized by partial functions must contain this constraint. 
In fact, this additional condition is also sufficient to describe the Galois closed sets of constraints
associated with the correspondence ${\bf pFSC}-{\bf CSF}$:

\begin{cor}
Consider arbitrary non-empty sets $A$ and $B$.
Let $\mathcal{T}$ be a set of $A$-to-$B$ relational constraints. Then the following are equivalent:
\begin{itemize}
\item[(i)]$\mathcal{T}$ is locally closed, contains the unary empty constraint, 
the unary trivial constraint and the binary equality constraint, 
 and is closed under formation of weak conjunctive minors;
\item[(ii)]$\mathcal{T}$ is characterized by some set of partial functions on $A$ to $B$.
\end{itemize}
\end{cor}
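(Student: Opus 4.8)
The plan is to reduce everything to Theorem 2, the single new ingredient being the binary equality constraint. For the implication $(ii)\Rightarrow(i)$, note that partial functions are in particular multivalued functions, so if $\mathcal{T}={\bf CSF}(\mathcal{M})$ for a set $\mathcal{M}\subseteq\Theta_{AB}^p$, then Theorem 2 already yields that $\mathcal{T}$ is locally closed, contains the unary empty and trivial constraints, and is closed under formation of weak conjunctive minors. The only addition is the equality constraint: since every partial function satisfies $(=_A,=_B)$ (as observed in Section 2), every member of $\mathcal{M}$ does, whence $(=_A,=_B)\in{\bf CSF}(\mathcal{M})=\mathcal{T}$.

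The substantive direction is $(i)\Rightarrow(ii)$, and here I would mimic the proof of Theorem 2, showing that every constraint $(R,S)\notin\mathcal{T}$ is separated from $\mathcal{T}$ by a \emph{partial} function; the set of all such separators then characterizes $\mathcal{T}$. Exactly as before, local closure furnishes a finite relaxation $(F,S_0)\notin\mathcal{T}$ of $(R,S)$, and the trivial and empty constraints force $F=\{{\bf a}^1,\ldots,{\bf a}^n\}\neq\emptyset$ and $S_0\neq B^m$. The separating function built in Theorem 2 is, however, genuinely multivalued whenever two columns coincide, $({\bf a}^1\ldots{\bf a}^n)(i)=({\bf a}^1\ldots{\bf a}^n)(j)$, while the selected tuple has $s_i\neq s_j$. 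The hard part will be to force the separating tuple ${\bf s}$ to agree on equal columns, so that the resulting function is singleton- or empty-valued, hence partial.

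This is where the equality constraint enters. Define an equivalence $\sim$ on $m$ by $i\sim j$ iff $({\bf a}^1\ldots{\bf a}^n)(i)=({\bf a}^1\ldots{\bf a}^n)(j)$, and set $E=\{{\bf b}\in B^m:b_i=b_j\textrm{ whenever }i\sim j\}$. First I would show $(F,E)\in\mathcal{T}$: intersecting, over all pairs $i\sim j$, the weak conjunctive minors of $(=_A,=_B)$ taken via the schemes $2\to m$ sending $(0,1)$ to $(i,j)$ produces a constraint with consequent $E$ and antecedent containing $F$ (each ${\bf a}^k$ respects $\sim$ by construction), so closure under weak conjunctive minors, together with the Transitivity Lemma for weak minors, gives $(F,E)\in\mathcal{T}$. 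Next I would check that $(F,S_0\cup(B^m\setminus E))\notin\mathcal{T}$: otherwise intersecting its consequent with $E$ (using $(F,E)\in\mathcal{T}$) would place $(F,S_0\cap E)$, and hence its relaxation $(F,S_0)$, in $\mathcal{T}$, a contradiction. Since the complement of $S_0\cup(B^m\setminus E)$ is exactly $E\setminus S_0$, the intersecting-consequents argument of Theorem 2 applied to this constraint now yields a tuple ${\bf s}\in E\setminus S_0$ with $(F,B^m\setminus\{{\bf s}\})\notin\mathcal{T}$; crucially, this ${\bf s}$ respects $\sim$.

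Finally I would define the partial function $g$ by $g(({\bf a}^1\ldots{\bf a}^n)(i))=\{s_i\}$ on the set of columns and $g({\bf a})=\emptyset$ elsewhere; this is well defined, and $g$ is partial, precisely because ${\bf s}\in E$ forces $s_i=s_j$ whenever the columns coincide. Then $g({\bf a}^1\ldots{\bf a}^n)=\{{\bf s}\}\not\subseteq S_0$, so $g$ separates $(F,S_0)$, hence $(R,S)$, from $\mathcal{T}$. That $g$ satisfies every member of $\mathcal{T}$ goes through verbatim as in Theorem 2: a failing $(R_1,S_1)\in\mathcal{T}$ would yield ${\bf s}_1={\bf s}h\notin S_1$ for a suitable $h$, the weak conjunctive minor $(R_h,S_h)$ of $(R_1,S_1)$ via $\{h\}$ would lie in $\mathcal{T}$, and $(F,B^m\setminus\{{\bf s}\})$ would be a relaxation of it, contradicting the choice of ${\bf s}$. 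I expect the sole genuinely new step, and thus the main obstacle, to be forcing ${\bf s}$ to respect $\sim$, that is, the detour through $(F,E)$ and $(F,S_0\cup(B^m\setminus E))$.
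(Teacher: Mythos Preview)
Your proof is correct and follows the same overall strategy as the paper --- reduce to Theorem~2 and use the equality constraint to force the separating function to be partial --- but your execution of the key step is more roundabout than necessary. The paper simply observes that for each pair $i\neq j$ with $i\sim j$ the constraint $(R_{ij},S_{ij})$, where $R_{ij}=\{{\bf a}\in A^m:a_i=a_j\}$ and $S_{ij}=\{{\bf b}\in B^m:b_i=b_j\}$, is a weak conjunctive minor of $(=_A,=_B)$ and hence lies in $\mathcal{T}$; it then argues \emph{a posteriori} that the tuple ${\bf s}$ already selected in the proof of Theorem~2 must respect $\sim$: if $i\sim j$ yet $s_i\neq s_j$, then $F\subseteq R_{ij}$ and $S_{ij}\subseteq B^m\setminus\{{\bf s}\}$, so $(F,B^m\setminus\{{\bf s}\})$ would be a relaxation of $(R_{ij},S_{ij})\in\mathcal{T}$, contradicting the choice of ${\bf s}$. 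Thus the function $g$ constructed in Theorem~2 is automatically partial, and no detour through $(F,E)$ or $(F,S_0\cup(B^m\setminus E))$ is needed. Your route has the minor virtue of making the constraint ${\bf s}\in E$ explicit before ${\bf s}$ is chosen, but the paper's one-line contradiction is shorter and avoids the two auxiliary constraints and the second run of the intersecting-consequents argument.
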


\begin{proof}
The implication $(ii)\Rightarrow (i)$, is a consequence of Theorem 2 and the observations above. 
The proof of the implication $(i)\Rightarrow (ii)$, follows exactly as the proof of $(ii)\Rightarrow (i)$ 
in Theorem 2. The key observation is that if $\mathcal{T}$ contains the binary equality 
constraint $(=_A,=_B)$, and it is closed under formation of weak conjunctive minors, then 
for every $i,j\in m$ such that $i\not=j$, the $m$-ary $A$-to-$B$ constraint 
$(R_{ij},S_{ij})$ defined by 
 \begin{displaymath}
 R_{ij}=\{(a_t\mid t\in m ): a_i=a_j \} \quad \textrm{ and } \quad 
 S_{ij}=\{(b_t\mid t\in m ): b_i=b_j \}
\end{displaymath} 
is in $\mathcal{T}$. From this fact, we have that
 in the proof of $(ii)\Rightarrow (i)$ of Theorem 2, the following holds for every $i,j\in m$:
 
\emph{if $({\bf a}^1\ldots {\bf a}^n)(j)=({\bf a}^1\ldots {\bf a}^n)(i)$, 
then ${\bf s}(j)={\bf s}(i)$}.

Indeed, as observed in the proof of Theorem 2 in \cite{CF}, if for some $i\not=j$, we have 
$({\bf a}^1\ldots {\bf a}^n)(j)=({\bf a}^1\ldots {\bf a}^n)(i)$, 
but ${\bf s}(j)\not={\bf s}(i)$, then $(F,B^m\setminus \{{\bf s}\})$ would be a relaxation
of $(R_{ij},S_{ij})$, and hence would be in $\mathcal{T}$, which is a contradiction.

Thus the separating function $g$ defined in the proof of $(i)\Rightarrow (ii)$ of Theorem 2,
 is in fact a partial function,
which completes the proof of Theorem 3.  
\end{proof}

The next two results are the analogues of Theorem 2 and Corollary 2, which show that, in addition, 
closure under formation of conjunctive minors suffices to describe 
the sets of relational constraints characterized by total functions.

\begin{theorem}
Consider arbitrary non-empty sets $A$ and $B$.
Let $\mathcal{T}$ be a set of $A$-to-$B$ relational constraints. Then the following are equivalent:
\begin{itemize}
\item[(i)]$\mathcal{T}$ is locally closed, contains the unary empty constraint
and the unary trivial constraint, 
 and is closed under formation of conjunctive minors;
\item[(ii)]$\mathcal{T}$ is characterized by some set of total multivalued functions on $A$ to $B$.
\end{itemize}
\end{theorem}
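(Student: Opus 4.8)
The plan is to prove Theorem 4 by mirroring the structure of Theorem 2, incorporating the stronger closure under conjunctive minors (rather than merely weak ones) and exploiting totality of the separating function in the nontrivial direction. For the implication $(ii)\Rightarrow (i)$, I would argue exactly as in Theorem 2: every multivalued function (total or not) satisfies the empty and trivial constraints, so $\mathcal{T}$ contains both; local closure follows verbatim from the finite-arity argument, since non-satisfaction of $(R,S)$ is witnessed on a finite subset $F\subseteq R$, making $(F,S)$ a finite relaxation outside $\mathcal{T}$. The only genuinely new point is closure under formation of conjunctive minors, and this is supplied directly by the second (total) statement of Lemma 1: since $\mathcal{T}={\bf CSF}(\mathcal{M})$ for a set $\mathcal{M}$ of total multivalued functions, every $f\in\mathcal{M}$ satisfies all conjunctive minors of families drawn from $\mathcal{T}$, whence $\mathcal{T}$ is closed under their formation.

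The substantive direction is $(i)\Rightarrow (ii)$, and here I would follow the construction in $(i)\Rightarrow (ii)$ of Theorem 2 closely but upgrade the separating function to a \emph{total} one. Given $(R,S)\not\in\mathcal{T}$ of arity $m$, local closure yields a finite relaxation $(F,S_0)\not\in\mathcal{T}$; as before, $S_0\neq B^m$ and $F\neq\emptyset$ (these use that $(F,B^m)$ and $(\emptyset^m,S_0)$ arise as conjunctive minors of the trivial and empty constraints), and intersecting consequents produces an $m$-tuple ${\bf s}\notin S_0$ with $(F,B^m\setminus\{{\bf s}\})\notin\mathcal{T}$. Writing $D=\{({\bf a}^1\ldots{\bf a}^n)(i):i\in m\}$ for the tuples appearing in $F$, the difficulty is that the function $g$ built in Theorem 2 is empty-valued off $D$, hence not total. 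To repair this I would define a total function $\tilde g$ that agrees with $g$ on $D$ but takes some arbitrary nonempty value (say, a fixed singleton $\{b_0\}$, or all of $B$) on every ${\bf a}\in A^n\setminus D$. Since $\tilde g$ still has ${\bf s}\in\tilde g({\bf a}^1\ldots{\bf a}^n)$, it fails $(F,S_0)$ and hence $(R,S)$.

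The main obstacle, and the step requiring care, is verifying that this totalized $\tilde g$ still satisfies every member of $\mathcal{T}$; enlarging the values off $D$ risks creating new violations. The crux is that in the contradiction argument of Theorem 2 one picks ${\bf c}^1,\ldots,{\bf c}^n\in R_1$ with $\tilde g({\bf c}^1\ldots{\bf c}^n)\not\subseteq S_1$ and must recover a map $h:m_1\to m$ with $({\bf c}^1\ldots{\bf c}^n)(i)=({\bf a}^1h\ldots{\bf a}^nh)(i)$; this recovery relied on each column of $({\bf c}^1\ldots{\bf c}^n)$ lying in $D$. With the extra values off $D$, some columns may leave $D$, so I expect to handle these via an \emph{indeterminate set}: the escaping columns are matched by a Skolem map into the indeterminates rather than by $h$ into the target $m$. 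Concretely, I would realize $(R_1,S_1)$-failure through a genuine (non-weak) conjunctive minor scheme $h:m_1\to m\cup V$ whose indeterminate coordinates absorb exactly the off-$D$ columns, using that any value assigned there is consistent because $\tilde g$ is nonempty-valued (totality) there. Closure under formation of conjunctive minors — not merely weak ones — then places the resulting $m$-ary minor in $\mathcal{T}$, and exhibiting $(F,B^m\setminus\{{\bf s}\})$ as a relaxation of it yields the contradiction, just as before. This is precisely where the hypotheses of Theorem 4 (conjunctive rather than weak minors) and the construction (totality of $\tilde g$) interlock, and I expect the bookkeeping of the Skolem map on $V$ to be the delicate part.
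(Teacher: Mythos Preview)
Your argument for $(ii)\Rightarrow(i)$ is fine and matches the paper. In $(i)\Rightarrow(ii)$, however, there is a genuine gap on the \emph{consequent} side of your conjunctive-minor scheme. With $h:m_1\to m\cup V$ sending the off-$D$ columns to fresh indeterminates, the tight $m$-ary conjunctive minor $(R_h,S_h)$ of $(R_1,S_1)$ has
\[
S_h=\{{\bf b}\in B^m:\ \exists\,\sigma:V\to B,\ ({\bf b}+\sigma)h\in S_1\}.
\]
To exhibit $(F,B^m\setminus\{{\bf s}\})$ as a relaxation you need ${\bf s}\notin S_h$, i.e.\ $({\bf s}+\sigma)h\notin S_1$ for \emph{every} Skolem map $\sigma$. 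But your hypothesis supplies only a single witness ${\bf s}_1\notin S_1$, corresponding to one particular $\sigma$; for other $\sigma$ the tuple $({\bf s}+\sigma)h$ may well lie in $S_1$. Totality of $\tilde g$ helps on the antecedent side (producing Skolem maps $\sigma_k$ with $({\bf a}^k+\sigma_k)h={\bf c}^k\in R_1$), but buys nothing on the consequent side, where the quantifier runs the wrong way. No choice of off-$D$ values for $\tilde g$ fixes this, and since every conjunctive minor via $h$ has consequent containing $S_h$, the obstruction is intrinsic to the scheme.

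The paper circumvents this asymmetry by a different device: it passes to \emph{infinitary} constraints. One extends the $m$-tuples ${\bf d}^1,\ldots,{\bf d}^n$ in $F$ to $\mu$-tuples ${\bf a}^1,\ldots,{\bf a}^n$ with $\mu=|A^n|$, so that the columns $({\bf a}^1\ldots{\bf a}^n)(i)$ enumerate \emph{all} of $A^n$. Working inside $\mathcal{T}^\infty$, the closure of $\mathcal{T}$ under conjunctive $\infty$-minors (whose finitary part is exactly $\mathcal{T}$, by Infinitary Transitivity), one locates a $\mu$-tuple ${\bf s}$ with $(R_F,B^\mu\setminus\{{\bf s}\})\notin\mathcal{T}^\infty$ and defines $g$ from this ${\bf s}$ by the same formula as in Theorem~2. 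Now $g$ is total \emph{by construction}, since its domain of definition is $\{({\bf a}^1\ldots{\bf a}^n)(i):i\in\mu\}=A^n$; and in the contradiction step every column of $({\bf c}^1\ldots{\bf c}^n)$ already equals some column of $({\bf a}^1\ldots{\bf a}^n)$, so the map $h:\rho\to\mu$ needs no indeterminates at all and the argument reduces to the weak-minor case of Theorem~2. Closure under genuine conjunctive minors is used not here but in the passage between the finitary $(F,S_0)$ and the infinitary $(R_F,S_F)$, via schemes with indeterminate set $\mu\setminus m$.
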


\begin{proof}
The proof of implication $(ii)\Rightarrow (i)$ follows as the
 proof of Theorem 2 (using the last statement in Lemma 1).
 To prove $(i)\Rightarrow (ii)$ we shall make use of notions and terminology, as well as few results
 particular to the proof of Theorem 2 in [CF]. Ordinals are allowed to be infinite,
unless they denote function arities which remain finite. 
Thus the relations and constraints considered in
this proof may be infinitary. Also, in minor formation schemes, 
the targets and members of the source families
are allowed to be arbitrary, possibly infinite, non-zero ordinals, so that the notion of
 conjunctive minor is naturally extended to this more general setting.
 We shall use the term "conjunctive
$\infty $-minor" to indicate a conjunctive minor which may be finitary or infinitary. 
As shown in \cite{CF} (see Claim 1 in the proof of Theorem 2),
the Transitivity Lemma is extended to this general setting: 

\begin{main3}\emph{(\cite{CF}:)}
If $(R,S)$ is a conjunctive $\infty $-minor of a non-empty family $(R_j,S_j)_{j\in J}$ 
of $A$-to-$B$ constraints, and, for each $j\in J$, $(R_j,S_j)$ is a conjunctive $\infty $-minor of a non-empty family 
$(R_{j}^i,S_{j}^i)_{i\in I_j}$,
then $(R,S)$ is a conjunctive $\infty $-minor of the non-empty family $(R_{j}^i,S_{j}^i)_{j\in J,i\in I_j}$.
\end{main3}
 
A proof of the Infinitary Transitivity can be obtained by allowing infinite ordinals, in
 the proof of the Transitivity Lemma.
 We use Infinitary Transitivity to prove the analogue of Claim 2 in the proof of Theorem 2 in \cite{CF}:

 \begin{main}
Let $\mathcal{T}$ be a locally closed set of finitary $A$-to-$B$ constraints containing 
the unary empty constraint and the unary trivial constraint, 
 and closed under formation of conjunctive minors, and let ${\mathcal{T}}^{\infty }$ 
be its closure under formation of conjunctive $\infty $-minors. 
Let $(R,S)$ be a finitary $A$-to-$B$ constraint not in $\mathcal{T}$.
Then there is a total multivalued function $g$ on $A$ to $B$ such that   
\begin{itemize}
\item[(1)] $g$ satisfies every constraint in ${\mathcal{T}}^{\infty }$, and 
\item[(2)] $g$ does not satisfy $(R,S)$.
\end{itemize}
\end{main}

Observe that by the Infinitary Transitivity, $\mathcal{T}$ is the set of all finitary constraints
 in ${\mathcal{T}}^{\infty }$.

{\sl Proof of Claim. }
Proceeding in analogy with the proof of Theorem 2,
we construct a total multivalued function $g$ which satisfies all constraints in 
${\mathcal{T}}^{\infty }$ but $g$ does not satisfy $(R,S)$.

Let $m$ be the arity of $(R,S)\not\in \mathcal{T}$. By the comment following the Claim,
 $(R,S)$ can not be in ${\mathcal{T}}^{\infty }$.
As in the proof of $(i)\Rightarrow (ii)$ in Theorem 2, let $(F,S_0)$ be a relaxation of $(R,S)$
 with finite antecedent, not in $\mathcal{T}$. As before, $F$ cannot be empty, and $S_0\not=B^m$.   
Let $F=\{{\bf d}^1,\ldots ,{\bf d}^n\}$ of finite size $1\leq n$. 

 Let $\mu =\mid A^n\mid $, and consider $\mu $-tuples ${\bf a}^1\ldots {\bf a}^n\in A^\mu $ such that 
 \begin{displaymath}
({\bf a}^1\ldots {\bf a}^n)(i)=({\bf d}^1\ldots {\bf d}^n)(i), \textrm{ for every $i\in m$} 
\end{displaymath} 
 and such that $\{({\bf a}^1\ldots {\bf a}^n(i):i\in \mu \backslash m\}$ are 
 the remaining distinct $n$-tuples in $A^n$ without repetitions.
Let $R_F$ be the $\mu $-ary relation defined by $R_F=\{{\bf a}^1,\ldots ,{\bf a}^n\}$, and let $S_F$
 be the $\mu $-ary relation comprising all those $\mu $-tuples ${\bf b}=(b_t\mid t\in \mu )$ in $B^{\mu }$
 such that $(b_t\mid t\in m)$ belongs to $S_0$. 
By the Infinitary Transitivity and the comment following Claim,
 it follows that $(R_F,S_F)\not\in {\mathcal{T}}^{\infty }$.

In analogy with the proof of Theorem 2, let ${\bf s} = (s_t\mid t\in \mu )$ be a $\mu $-tuple in $B^{\mu }$ such that 
$(s_t\mid t\in m)$ is not in $S_0$, and for which $(R_F,B^{\mu }\setminus \{{\bf s}\})$ 
does not belong to $\mathcal{T}^{\infty }$.
Consider the $n$-ary multivalued function $g$ on $A$ to $B$, defined by
 \begin{displaymath}
g(({\bf a}^1\ldots {\bf a}^n)(i))=
\cup \{s_j: j\in \mu \textrm{ and }({\bf a}^1\ldots {\bf a}^n)(j)=({\bf a}^1\ldots {\bf a}^n)(i)\},
\end{displaymath}  for every $i\in \mu $. 
Note that for every ${\bf a}\in \{({\bf a}^1\ldots {\bf a}^n)(i):i\in \mu \}=A^n$, we have
$g({\bf a})\not=\emptyset $, that is, $g$ is total.
Also, ${\bf s}\in g({\bf a}^1\ldots {\bf a}^n)$,
 thus $g$ does not satisfy $(R_F,S_F)$. Since $(R_F,S_F)$ is a conjunctive minor of $(F,S_0)$, it follows 
from Lemma 1 that $g$ does not satisfy $(F,S_0)$
  and hence, it does not satisfy $(R,S)$. 
   
Now we show that $g$ also satisfies $(1)$. For a contradiction, 
suppose that there is a $\rho $-ary constraint $(R_1,S_1)\in {\mathcal{T}}^{\infty }$,
 which is not satisfied by $g$. That is,
 for some ${\bf c}^1,\ldots ,{\bf c}^n$ in $R_1$ we have 
$g({\bf c}^1\ldots {\bf c}^n)\not\subseteq  S_1$.
Let ${\bf s}_1$ be an $\rho $-tuple in $g({\bf c}^1\ldots {\bf c}^n)$
such that ${\bf s}_1\not\in S_1$, and  
let $h:\rho \rightarrow \mu $ be any map such that, for every $i\in \rho $:
\begin{itemize}
\item[(a)] ${\bf s}_1(i)=({\bf s}h)(i) $, and 
\item[(b)] $({\bf c}^1\ldots {\bf c}^n)(i)=({\bf a}^1h\ldots {\bf a}^nh)(i)$.
\end{itemize}
Note that $(b)$ implies that ${\bf c}^j={\bf a}^jh$, for every $1\leq j\leq n$.   

Let $(R_h,S_h)$ be the $\mu $-ary tight conjunctive $\infty $-minor of $(R_1,S_1)$ via $H=\{h\}$,
 i.e. for every $\mu $-tuple ${\bf a}$ of $A^{\mu }$, ${\bf a}\in R_h$ if and only if ${\bf a}h\in R_1$,
 and for every $\mu $-tuple ${\bf b}$ of $B^{\mu }$, ${\bf b}\in S_h$ if and only if  
${\bf b}h\in S_1$. Clearly, ${\bf a}^1,\ldots ,{\bf a}^n\in  R_h$, that is, $R_F\subseteq R_h$, and
 ${\bf s}\not\in S_h$. Thus $(R_F,B^{\mu }\setminus \{{\bf s}\})$
 is a relaxation of $(R_h,S_h)$, and, since ${\mathcal{T}}^{\infty }$ is closed under 
formation of conjunctive $\infty $-minors, it follows from the Infinitary Transitivity that  
$(R_F,B^{\mu }\setminus \{{\bf s}\})\in {\mathcal{T}}^{\infty }$, yielding the desired contradiction, and
the proof of the Claim is complete.
 
\smallskip

By the Claim above, it follows that for every constraint $(R,S)$ not in $\mathcal{T}$ there is a total
multivalued function $g$ on $A$ to $B$ 
which does not satisfy $(R,S)$ but satisfies in particular every constraint in $\mathcal{T}$.
In other words, the implication $(i)\Rightarrow (ii)$ also holds.
\end{proof}

Note that the unary trivial constraint $(A,B)$, is a tight conjunctive minor of the binary equality 
constraint $(=_A,=_B)$.

\begin{cor} \emph{(In \cite{CF}:)}
Consider arbitrary non-empty sets $A$ and $B$.
Let $\mathcal{T}$ be a set of $A$-to-$B$ relational constraints. Then the following are equivalent:
\begin{itemize}
\item[(i)]$\mathcal{T}$ is locally closed, contains the unary empty constraint
and the binary equality constraint, and it is closed under formation of conjunctive minors;
\item[(ii)]$\mathcal{T}$ is characterized by some set of single-valued functions on $A$ to $B$.
\end{itemize}
\end{cor}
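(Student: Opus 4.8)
The plan is to derive this corollary by combining Theorem 3 with the equality-constraint argument used in the proof of Corollary 2, exploiting the identity $\Theta_{AB}^s=\Theta_{AB}^t\cap \Theta_{AB}^p$: a single-valued function is precisely one that is simultaneously total and partial. Since closure under formation of conjunctive minors subsumes closure under formation of weak conjunctive minors, and since the unary trivial constraint $(A,B)$ is a tight conjunctive minor of $(=_A,=_B)$ (as noted just above), condition $(i)$ here is exactly condition $(i)$ of Theorem 3 strengthened only by the requirement that $(=_A,=_B)\in \mathcal{T}$; this is why the trivial constraint need not be assumed separately.

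For the implication $(ii)\Rightarrow (i)$, I would argue as follows. Suppose $\mathcal{T}={\bf CSF}(\mathcal{M}^s)$ for a set $\mathcal{M}^s$ of single-valued functions. Every such function is total, so by the last statement of Lemma 1 the set $\mathcal{T}$ is closed under formation of conjunctive minors. Every such function is also partial, hence satisfies $(=_A,=_B)$, so this constraint lies in $\mathcal{T}$; and every multivalued function satisfies the empty constraint $(\emptyset,\emptyset)$. Local closure of $\mathcal{T}$ then follows verbatim from the corresponding argument in the proof of $(ii)\Rightarrow (i)$ of Theorem 2, since the witness to non-satisfaction of a relaxation is of finite arity.

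For $(i)\Rightarrow (ii)$, I would run the construction in the proof of the Claim inside Theorem 3. Given a finitary constraint $(R,S)\notin \mathcal{T}$, that construction produces a \emph{total} multivalued function $g$ on $A$ to $B$ satisfying every constraint in $\mathcal{T}^{\infty}$ but not $(R,S)$. The only remaining task is to check that, under the extra hypothesis $(=_A,=_B)\in \mathcal{T}$, the function $g$ is in fact single-valued. This is exactly the point established in the proof of Corollary 2: since $\mathcal{T}$ contains the equality constraint and is closed under formation of conjunctive minors, the diagonal constraints $(R_{ij},S_{ij})$ lie in $\mathcal{T}^{\infty}$, whence ${\bf s}(i)={\bf s}(j)$ whenever $({\bf a}^1\ldots{\bf a}^n)(i)=({\bf a}^1\ldots{\bf a}^n)(j)$. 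Consequently the value
\[
g(({\bf a}^1\ldots{\bf a}^n)(i))=\cup\{s_j : j\in\mu,\ ({\bf a}^1\ldots{\bf a}^n)(j)=({\bf a}^1\ldots{\bf a}^n)(i)\}
\]
is a singleton, so $g$ is both total and singleton-valued on every $n$-tuple of $A^n$, i.e. $g\in \Theta_{AB}^s$. Collecting these separating functions over all $(R,S)\notin \mathcal{T}$ yields a set of single-valued functions characterizing $\mathcal{T}$.

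The main obstacle will be verifying that the equality-constraint argument of Corollary 2 transfers cleanly to the infinitary setting of Theorem 3's proof: one must confirm that the diagonal constraints $(R_{ij},S_{ij})$, now indexed over the possibly infinite ordinal $\mu=\mid A^n\mid$, are genuine conjunctive $\infty$-minors of $(=_A,=_B)$ and hence belong to $\mathcal{T}^{\infty}$ by the Infinitary Transitivity, so that the singleton-valuedness of $g$ follows in the same way the total property does. Once this compatibility is in place, no further computation is required.
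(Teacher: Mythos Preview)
Your proposal is correct and follows essentially the same approach as the paper: the paper's (very terse) proof says precisely that $(ii)\Rightarrow(i)$ follows from Corollary~2 and Theorem~3, while $(i)\Rightarrow(ii)$ is obtained by running the separating-function construction of Theorem~3 and then applying the equality-constraint argument of Corollary~2 to force singleton-valuedness of $g$. Your identification of the one genuine check---that the $\mu$-ary diagonal constraints $(R_{ij},S_{ij})$ are tight conjunctive $\infty$-minors of $(=_A,=_B)$ and hence lie in $\mathcal{T}^{\infty}$, so that $(R_F,B^{\mu}\setminus\{{\bf s}\})$ would be a relaxation of a member of $\mathcal{T}^{\infty}$ whenever ${\bf s}(i)\neq{\bf s}(j)$ with $({\bf a}^1\ldots{\bf a}^n)(i)=({\bf a}^1\ldots{\bf a}^n)(j)$---is exactly the point the paper leaves implicit.
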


\begin{proof}
The implication $(i)\Rightarrow (ii)$, is a consequence of Corollary 2 and Theorem 4. 
The proof of $(ii)\Rightarrow (i)$ is analogous to that of Corollary 2,
but following the lines in the proof of $(ii)\Rightarrow (i)$ of Theorem 4. 
\end{proof}

In order to factorize the closure operators associated with the Galois connections 
for generalized functions and constraints defined in section 2, as compositions of the operators 
${\bf LO}$, ${\bf wCM}$, and ${\bf CM}$, we shall make use of the following analogues of $(iii)$ and 
$(iv)$ in Proposition 1:

\begin{proposition} 
Consider arbitrary non-empty sets $A$ and $B$, and let $\mathcal{T}$ be a set of $A$-to-$B$ relational
constraints.
\begin{itemize}
\item[(i)] If ${\bf CM}(\mathcal{T})=\mathcal{T}$,
 then ${\bf CM}({\bf LO}(\mathcal{T}))={\bf LO}(\mathcal{T})$.
\item[(ii)] If ${\bf wCM}(\mathcal{T})=\mathcal{T}$,
 then ${\bf wCM}({\bf LO}(\mathcal{T}))={\bf LO}(\mathcal{T})$.
\end{itemize}
\end{proposition}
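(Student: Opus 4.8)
The plan is to prove both statements by showing the nontrivial inclusion in each case, namely that applying $\mathbf{CM}$ (respectively $\mathbf{wCM}$) to $\mathbf{LO}(\mathcal{T})$ produces nothing outside $\mathbf{LO}(\mathcal{T})$. The reverse inclusions $\mathbf{LO}(\mathcal{T}) \subseteq \mathbf{CM}(\mathbf{LO}(\mathcal{T}))$ and $\mathbf{LO}(\mathcal{T}) \subseteq \mathbf{wCM}(\mathbf{LO}(\mathcal{T}))$ are immediate from extensivity of $\mathbf{CM}$ and $\mathbf{wCM}$ (Fact 2). So for $(i)$ it suffices to establish $\mathbf{CM}(\mathbf{LO}(\mathcal{T})) \subseteq \mathbf{LO}(\mathcal{T})$, and $(ii)$ is entirely parallel with $\mathbf{wCM}$ in place of $\mathbf{CM}$. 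The whole argument is the constraint-side dual of Proposition~1~$(iii)$, and I would model it on that proof.

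First I would take a constraint $(R,S) \in \mathbf{CM}(\mathbf{LO}(\mathcal{T}))$, say $m$-ary. By the Transitivity Lemma, $\mathbf{CM}(\mathbf{LO}(\mathcal{T}))$ is exactly the set of conjunctive minors of families drawn from $\mathbf{LO}(\mathcal{T})$, so there is a non-empty family $(R_j,S_j)_{j\in J}$ of constraints in $\mathbf{LO}(\mathcal{T})$ and a scheme $H=(h_j)_{j\in J}$, $h_j\colon n_j \to m\cup V$, exhibiting $(R,S)$ as a conjunctive minor via $H$. Since $\mathbf{LO}(\mathcal{T})$ is locally closed by Fact~3, it suffices to show that every finite relaxation $(F,S)$ of $(R,S)$, with $F\subseteq R$ finite, already lies in $\mathbf{LO}(\mathcal{T})$; then $(R,S)\in\mathbf{LO}(\mathbf{LO}(\mathcal{T}))=\mathbf{LO}(\mathcal{T})$ by idempotence. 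The key step is to produce, for each such finite $F$, a finite relaxation $(F_j, S_j)$ of each $(R_j,S_j)$ so that $(F,S)$ is itself a conjunctive minor of the smaller family $(F_j,S_j)_{j\in J}$ via the \emph{same} scheme $H$. Concretely I would set $F_j = \{(\mathbf{a}+\sigma_{\mathbf a})h_j : \mathbf{a}\in F\}$, where $\sigma_{\mathbf a}\colon V\to A$ is a Skolem map witnessing condition~(a) of the conjunctive-minor definition for the tuple $\mathbf{a}\in F\subseteq R$; each $F_j$ is finite because $F$ is, and $(F_j,S_j)$ is a finite relaxation of $(R_j,S_j)$, hence lies in $\mathbf{LO}(\mathcal{T})$ since $\mathbf{LO}(\mathcal{T})$ is locally closed and contains $(R_j,S_j)$.

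With this family in hand I would verify directly that $(F,S)$ is a conjunctive minor of $(F_j,S_j)_{j\in J}$ via $H$: condition~(a) holds by the choice of the $\sigma_{\mathbf a}$, and condition~(b) is inherited unchanged because the consequents $S_j$ and the target consequent $S$ are untouched, so the Skolem-map argument on the $B$-side is identical to the one already satisfied by the original family. Thus $(F,S)\in \mathbf{CM}(\mathbf{LO}(\mathcal{T}))$ arises from a family still contained in $\mathbf{LO}(\mathcal{T})$; but $(F,S)$ has finite antecedent, and the point is that finite relaxations obtained this way can be pushed back into $\mathbf{LO}(\mathcal{T})$ itself. For $(ii)$ the scheme $H$ is simple, the indeterminate set $V$ is empty, the Skolem maps disappear, and the construction collapses to $F_j = \{\mathbf{a}h_j : \mathbf{a}\in F\}$, giving the weak analogue verbatim.

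The main obstacle I anticipate is the bookkeeping around the Skolem maps $\sigma_{\mathbf a}$ in case~$(i)$: because each tuple $\mathbf a\in F$ may require its own witness, the finite antecedents $F_j$ must collect the images of \emph{all} these witnesses, and one must check that this still yields a genuine finite relaxation of $(R_j,S_j)$ and that condition~(a) for $(F,S)$ as a minor of $(F_j,S_j)_{j\in J}$ is met tuple-by-tuple using precisely the matching $\sigma_{\mathbf a}$. In the weak case $(ii)$ this difficulty evaporates, which is exactly why it runs in lockstep with the simpler Proposition~1~$(iii)$ argument. Once the finite-relaxation reduction is set up correctly, idempotence of $\mathbf{LO}$ (Fact~3) and of $\mathbf{CM}$ and $\mathbf{wCM}$ (Fact~2) close both arguments.
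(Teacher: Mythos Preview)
Your strategy matches the paper's, but the argument as written is circular and never actually uses the hypothesis $\mathbf{CM}(\mathcal{T})=\mathcal{T}$. You claim each $(F_j,S_j)$ lies in $\mathbf{LO}(\mathcal{T})$ ``since $\mathbf{LO}(\mathcal{T})$ is locally closed and contains $(R_j,S_j)$''; that is the wrong direction of local closure (local closure says a constraint whose finite relaxations all lie in the set must itself lie in the set, not that finite relaxations of members are members). Worse, even granting $(F_j,S_j)\in\mathbf{LO}(\mathcal{T})$, all you then obtain is $(F,S)\in\mathbf{CM}(\mathbf{LO}(\mathcal{T}))$, which is exactly the set you are trying to show equals $\mathbf{LO}(\mathcal{T})$; the phrase ``can be pushed back into $\mathbf{LO}(\mathcal{T})$ itself'' is where a genuine argument is needed and none is supplied.

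The repair is to land in $\mathcal{T}$, not merely in $\mathbf{LO}(\mathcal{T})$. From $\mathbf{CM}(\mathcal{T})=\mathcal{T}$ the set $\mathcal{T}$ is closed under relaxations; together with the explicit description of $\mathbf{LO}(\mathcal{T})$ (namely $\mathcal{T}$ plus those constraints all of whose finite relaxations lie in $\mathcal{T}$), this yields that every finite relaxation of any member of $\mathbf{LO}(\mathcal{T})$ already lies in $\mathcal{T}$. Hence $(F_j,S_j)\in\mathcal{T}$ for each $j$, so your finite-antecedent constraint is a conjunctive minor of a family in $\mathcal{T}$ and therefore belongs to $\mathbf{CM}(\mathcal{T})=\mathcal{T}$. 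Now every finite relaxation of $(R,S)$ is in $\mathcal{T}$, and $(R,S)\in\mathbf{LO}(\mathcal{T})$ follows straight from the definition. (The paper first passes to the \emph{tight} conjunctive minor $(R_0,S_0)$ via $H$ so that an arbitrary finite relaxation $(F,S')$ with $S'\supseteq S$ is treated uniformly; once you land in $\mathcal{T}$ this detour is not strictly necessary, since $\mathcal{T}$ is closed under relaxing the consequent as well, but you should either do likewise or say why consequent $S'=S$ suffices.)
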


\begin{proof}
We follow the strategy used in the proof of Proposition 1 (b) in \cite{Co}. 
 By Fact 2, to prove $(i)$ we only need to show that 
${\bf CM}({\bf LO}(\mathcal{T}))\subseteq {\bf LO}(\mathcal{T})$, i.e. that every 
conjunctive minor of a family of constraints in ${\bf LO}(\mathcal{T})$,
 is also in ${\bf LO}(\mathcal{T})$. 
So let $(R,S)$ be a conjunctive minor of a non-empty family $(R_j,S_j)_{j\in J}$
 of constraints in ${\bf LO}(\mathcal{T})$ via a scheme $H=(h_j)_{j\in J}$ with indeterminate set $V$.
 Consider the tight conjunctive minor $(R_0,S_0)$ of the family $(R_j,S_j)_{j\in J}$ 
via the same scheme $H$.
Since every relaxation of $(R,S)$ is a relaxation of $(R_0,S_0)$, in order to prove that
$(R,S)\in {\bf LO}(\mathcal{T})$, it is enough to show that every finite relaxation of
 $(R_0,S_0)$ is in $\mathcal{T}$.

Let $(F,S')$ be a finite relaxation of $(R_0,S_0)$,
 say $F$ having $n$ distinct elements ${\bf a}_1, \ldots ,{\bf a}_n$.
Note that for every ${\bf a}_i\in F$,
there is a Skolem map ${\sigma }_i:V\rightarrow A$ such that, for all $j$ in $J$, we have
$({\bf a}_i+{\sigma }_i)h_j\in R_j$.
For each $j$ in $J$, let $F_j$ be the subset of $R_j$, given by  
\begin{displaymath}
F_j=\{({\bf a}_i+{\sigma }_i)h_j: {\bf a}_i\in F\}. 
\end{displaymath}
Clearly, $(F,S')$ is a conjunctive minor of the family $(F_j,S_j)_{j\in J}$, and for each
$j$ in $J$, $(F_j,{S}_j)$ is a finite relaxation of $(R_j,S_j)$. 
Since ${\bf CM}(\mathcal{T})=\mathcal{T}$, and for each $j$ in $J$,
$(R_j,S_j)$ is in ${\bf LO}(\mathcal{T})$,   
we have that every member of the family $(F_j,S_j)_{j\in J}$ belongs to $\mathcal{T}$.  
 Hence $(F,S')$ 
is a conjunctive minor of a family of members of $\mathcal{T}$, and thus $(F,S')$  
is also in $\mathcal{T}$. 

The proof of $(ii)$ can be easily obtained by substituting "conjunctive minor" for "weak conjunctive minor",
and defining the finite subsets $F_j$ of $R_j$, by
$F_j=\{{\bf a}_ih_j: {\bf a}_i\in F\}$.  
\end{proof}

In other words, ${\bf LO}({\bf wCM}(\mathcal{T}))$ and ${\bf LO}({\bf CM}(\mathcal{T}))$ are the smallest
locally closed sets of constraints containing $\mathcal{T}$, which are 
closed under formation of weak conjunctive minors and 
closed under formation of conjunctive minors, respectively.
Using the characterizations of the Galois closed sets of constraints, we obtain the following decompositions 
of the closure operators associated with the corresponding Galois connections:

\begin{proposition}
Consider arbitrary non-empty sets $A$ and $B$.
 For any set $\mathcal{T}$ of $A$-to-$B$ relational constraints, the following hold:
\begin{itemize}
\item[(i)] ${\bf CSF}({\bf mFSC}(\mathcal{T}))=
{\bf LO}({\bf wCM}(\mathcal{T}\cup \{(\emptyset ,\emptyset ),(A,B)\}))$,
\item[(ii)] ${\bf CSF}({\bf pFSC}(\mathcal{T}))=
{\bf LO}({\bf wCM}(\mathcal{T}\cup \{(\emptyset ,\emptyset ),(A,B),(=_A,=_B)\}))$, 
\item[(iii)] ${\bf CSF}({\bf tFSC}(\mathcal{T}))=
{\bf LO}({\bf CM}(\mathcal{T}\cup \{(\emptyset ,\emptyset ),(A,B)\}))$, and
\item[(iv)] ${\bf CSF}({\bf sFSC}(\mathcal{T}))=
{\bf LO}({\bf CM}(\mathcal{T}\cup \{(\emptyset ,\emptyset ),(=_A,=_B)\}))$.
\end{itemize}
\end{proposition}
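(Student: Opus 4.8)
The plan is to exploit the fact that the composite $\mathcal{T}\mapsto{\bf CSF}({\bf mFSC}(\mathcal{T}))$ is precisely the constraint-side closure operator of the Galois connection ${\bf mFSC}-{\bf CSF}$, so that ${\bf CSF}({\bf mFSC}(\mathcal{T}))$ is the smallest Galois closed set of constraints containing $\mathcal{T}$; the same holds, \emph{mutatis mutandis}, for the restricted connections ${\bf pFSC}-{\bf CSF}$, ${\bf tFSC}-{\bf CSF}$ and ${\bf sFSC}-{\bf CSF}$ underlying $(ii)$--$(iv)$. Each part then reduces to matching this smallest closed set with the corresponding composite of ${\bf LO}$ with ${\bf wCM}$ (or with ${\bf CM}$). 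This is the exact dual of the argument proving Proposition 2 on the function side.

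For $(i)$ I would first invoke Theorem 2 to identify the Galois closed sets of ${\bf mFSC}-{\bf CSF}$ as exactly those sets of constraints which are locally closed, contain $(\emptyset,\emptyset)$ and $(A,B)$, and are closed under formation of weak conjunctive minors. Consequently ${\bf CSF}({\bf mFSC}(\mathcal{T}))$ is the smallest set enjoying all three properties and containing $\mathcal{T}$; since every such set must contain $(\emptyset,\emptyset)$ and $(A,B)$, this is the same as the smallest such set containing $\mathcal{T}\cup\{(\emptyset,\emptyset),(A,B)\}$. On the other hand, by the comment following the proof of Proposition 3, ${\bf LO}({\bf wCM}(\mathcal{T}\cup\{(\emptyset,\emptyset),(A,B)\}))$ is precisely the smallest locally closed set containing $\mathcal{T}\cup\{(\emptyset,\emptyset),(A,B)\}$ that is closed under formation of weak conjunctive minors. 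As both sides are the smallest member of one and the same closure system (the Galois closed sets) containing the same generating set, they coincide.

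Parts $(ii)$--$(iv)$ would then follow the identical template, each time substituting the appropriate characterization and closure operators. For $(ii)$ I would use Corollary 2, whose closed sets additionally contain the binary equality constraint $(=_A,=_B)$, while retaining the weak operator ${\bf wCM}$; for $(iii)$ I would use Theorem 3, replacing ${\bf wCM}$ by ${\bf CM}$ since the closed sets are now closed under formation of arbitrary conjunctive minors, still with the generators $(\emptyset,\emptyset)$ and $(A,B)$; and for $(iv)$ I would use Corollary 3 together with ${\bf CM}$, observing that the trivial constraint $(A,B)$ need not be adjoined because, by the remark recorded after Theorem 3, it is a (tight) conjunctive minor of $(=_A,=_B)$ and is therefore already produced by ${\bf CM}$ --- which is exactly why the generating set in $(iv)$ is only $\mathcal{T}\cup\{(\emptyset,\emptyset),(=_A,=_B)\}$.

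I do not expect a genuine obstacle, as all the structural work is already in place: the four characterizations (Theorem 2, Corollary 2, Theorem 3, Corollary 3) supply the defining closure properties of the respective Galois closed sets, while Proposition 3 together with Facts 2 and 3 guarantees that ${\bf LO}\circ{\bf wCM}$ and ${\bf LO}\circ{\bf CM}$ realize the corresponding smallest closed sets. The only point demanding care is the bookkeeping: verifying in each case that the explicitly adjoined constraints match exactly the ``contains'' clauses of the relevant characterization, and in $(iv)$ justifying the omission of $(A,B)$ via the observation following Theorem 3. Once this matching is confirmed, equality of the two sides is immediate from the uniqueness of the smallest closed set generated by a given set under a fixed family of closure conditions.
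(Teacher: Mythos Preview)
Your proposal is correct and follows exactly the approach the paper intends: the paper states Proposition~4 without an explicit proof, merely prefacing it with the remark that it follows ``using the characterizations of the Galois closed sets of constraints'' together with the observation (after Proposition~3) that ${\bf LO}\circ{\bf wCM}$ and ${\bf LO}\circ{\bf CM}$ yield the smallest locally closed sets closed under the respective minor formations. Your argument spells out precisely this, invoking Theorem~2, Corollary~2, Theorem~3 and Corollary~3 for the four cases and handling the bookkeeping (including the omission of $(A,B)$ in $(iv)$) correctly; it is also, as you note, the exact dual of the proof of Proposition~2.
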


\end{document}